\definecolor{purple}{rgb}{0.49, 0.06, 0.51}
\newcommand{\R}{\mathbb{R}}
\newcommand{\N}{\mathbb{N}}
\newcommand{\Z}{\mathbb{Z}}
\newcommand{\CF}{\mathscr{F}}
\newcommand{\CM}{\mathscr{M}}
\newcommand{\ox}{\otimes}
\newcommand{\x}{\times}
\newcommand{\bs}{\boldsymbol}
\newcommand{\vf}{\varphi}
\newcommand{\ve}{\varepsilon}
\newcommand{\vt}{\vartheta}
\newcommand{\s}{\sigma}
\newcommand{\too}{\longrightarrow}
\newcommand{\mapstoo}{\longmapsto}
\newcommand{\simtoo}{\overset\sim\longrightarrow}
\newcommand{\simm}{\!\!\sim}
\DeclareMathOperator*{\lperp}{\textrm {\Huge$\perp$}}
\newcommand{\df}{\emph}
\renewcommand{\bar}{\overline}
\newcommand{\Xt}{\widetilde{X}}
\newcommand{\id}{\mathrm{id}}
\newcommand{\la}{\langle}
\newcommand{\ra}{\rangle}
\newcommand{\ad}{\mathrm{ad}}
\DeclareMathOperator{\im}{Im}
\DeclareMathOperator{\sign}{sign}
\DeclareMathOperator{\sgn}{sgn}
\DeclareMathOperator{\Sym}{Sym}
\DeclareMathOperator{\rk}{rank}
\DeclareMathOperator{\Hom}{Hom}
\DeclareMathOperator{\Nil}{Nil}
\DeclareMathOperator{\End}{End}
\newtheorem{lemma}{Lemma}[section]
\newtheorem{thm}[lemma]{Theorem}
\newtheorem{prop}[lemma]{Proposition}
\newtheorem{cor}[lemma]{Corollary}
\theoremstyle{definition}
\newtheorem{defi}[lemma]{Definition}
\newtheorem{ex}[lemma]{Example}
\begin{document}
\title{Signatures of hermitian forms and  ``prime ideals'' of Witt groups}
\author{Vincent Astier and Thomas Unger}

\address{School of Mathematical Sciences\\ University College Dublin\\ Belfield,
Dublin~4, Ireland} 
\email{vincent.astier@ucd.ie, thomas.unger@ucd.ie}

\subjclass[2010]{16K20, 11E39, 13J30}

\begin{abstract}
In this paper a further study is made of $H$-signatures of hermitian forms, introduced 
previously by the authors. 
It is shown that a tuple of reference forms $H$ may be replaced by a single form and
that the $H$-signature is  invariant under Morita equivalence of algebras with involution. The 
``prime ideals'' of the Witt group are studied, obtaining results that are analogues of the classification of prime ideals of the Witt ring  by Harrison and Lorenz-Leicht.  
It follows that $H$-signatures canonically correspond to morphisms into the integers.
\smallskip

\noindent \textsc{Key words.} Central simple algebra,
involution, hermitian form, signature, Witt group, Morita equivalence
\end{abstract}

\maketitle

\section{Introduction}

Let $F$ be a formally real field and let $(A,\s)$ be an $F$-algebra with involution. In \cite{AU} the notion of $H$-signature 
$\sign_P^H h$
of a hermitian form $h$ over $(A,\s)$ with respect to an ordering $P$ on $F$ is defined. This is a refinement of the definition of
signature $\sign_P h$ in \cite{BP}. Both signatures are defined via scalar extension to a real closure $F_P$ of $F$ at $P$, followed 
by an application of Morita theory, which reduces the computation to what essentially is the case of classical signatures of quadratic forms. 

Pfister's local-global principle holds for both signatures since $\sign_Ph=0$ if and only if $\sign_P^H 
h=0$, see \cite{LU}, and the Knebusch trace formula holds for $H$-signatures, see \cite{AU}.

Both $\sign_P^H$ and $\sign_P$ should be considered as relative invariants of forms in the sense that knowledge about the non-triviality of the signature of 
some fixed reference form(s) is needed in order to compute the signature of an arbitrary form. In \cite{BP} this reference form is taken to be
the unit form $\la 1\ra_\s$. As  was demonstrated in \cite[3.11]{AU}, $\sign_P\la 1\ra_\s=0$ whenever $\s$ becomes hyperbolic over the  extended algebra $A\ox_F F_P$ and thus $\la 1\ra_\s$ cannot be used as a reference form in this case. In \cite[6.4]{AU} the existence of a finite tuple $H$ of reference  forms was established that does not suffer from this problem.

After recalling some definitions and properties, we show
in this paper that the tuple $H$ can be replaced by just one reference form that has non-zero $H$-signature with respect to all relevant orderings on $F$. In the remainder of the paper we show that $H$-signatures are invariant under Morita equivalence of $F$-algebras with involution and we 
study the ``prime ideals'' of the Witt group $W(A,\s)$, obtaining results that are analogues of the classification of prime ideals of the Witt ring $W(F)$ by Harrison \cite{H} and Lorenz and Leicht \cite{LL}.  
As a consequence we show 
that $H$-signatures are a canonical notion in the sense that they
can be identified with a  natural  set of morphisms from $W(A,\s)$ to $\Z$.

\section{Notation}

We give a brief overview of the notation we use and refer to the standard references \cite{Knus}, \cite{BOI}, \cite{Lam} and \cite{Sch} as well as to \cite{AU} for the details.

Let $F$ be a field of characteristic different from $2$. We write $W(F)$ for the \df{Witt ring} of Witt equivalence classes of quadratic forms over 
$F$ and $I(F)$ for its \df{fundamental ideal}. 

By an \df{$F$-algebra with involution} we mean a pair $(A,\s)$ where $A$ is a finite-di\-men\-sional $F$-algebra, whose centre $Z(A)$ satisfies $[Z(A):F]=:\kappa\leq 2$, and 
which is assumed to be either simple (if $Z(A)$ is a field) or a direct product 
of two simple algebras (if $Z(A)=F\x F$) and $\s:A\to A$ is an $F$-linear involution. Let $\Sym(A,\s)=\{a\in A\mid \s(a)=a\}$ and $\Sym(A,\s)^\x=\Sym(A,\s)\cap A^\x$
denote the set of symmetric elements and the set of invertible symmetric elements in $A$ with respect to $\s$, respectively. When $\kappa=1$, we say that 
$\s$ is \df{of the first kind}. When $\kappa=2$, we say that $\s$ is \df{of the second kind} (or \df{unitary}).  Assume $\kappa=1$ and $\dim_FA=m^2\in \N$. Then $\s$ is \df{orthogonal} or \df{symplectic} if $\dim_F \Sym(A,\s)= m(m+1)/2$ or $m(m-1)/2$, respectively. 

For $\ve\in\{-1,1\}$ we write $W_\ve(A,\s)$ for the \df{Witt group} of Witt equivalence classes of $\ve$-hermitian forms $h:M\x M\to A$, defined on 
finitely generated right $A$-modules $M$. All forms in this paper are assumed to be non-singular and are identified with their Witt equivalence classes, unless indicated otherwise.
We write $+$ for the sum in both $W(F)$ and $W_\ve(A,\s)$. The group 
$W_\ve(A,\s)$ is a $W(F)$-module and we denote the product of $q\in W(F)$ and $h\in W_\ve(A,\s)$ by $q\cdot h$.

From the structure theory of $F$-algebras with involution it follows that $A$ is isomorphic to a full matrix algebra $M_n(D)$ for a unique $n\in \N$ and an $F$-division algebra $D$
(unique up to isomorphism), equipped with an involution $\vt$ of the same kind as $\s$.
Then $(A,\s)$ and $(D,\vt)$ are Morita equivalent, which yields a (non-canonical) isomorphism $W_\ve(A,\s) \cong W_{\ve\mu}(D,\vt)$ where $\mu\in\{-1,1\}$. 
For the purpose of this paper (the study of signatures) and without loss of generality
we may assume that $\ve=\mu=1$, cf. \cite[2.1]{AU} and thus only consider hermitian forms over $(A,\s)$.

Let $h:M\x M\to A$ be a hermitian form over $(A,\s)$. By Wedderburn theory, $M$ decomposes into a direct sum of simple right $A$-modules that are all isomorphic to $D^n$, $M\cong (D^n)^k$, for some $k\in \N$ which we call the \df{rank} of $h$, denoted $\rk h$. The rank of $h$ is invariant under Morita equivalence, cf. \cite[2.1]{BP1} or \cite[\S 3.2.1]{GB}.

For $\ell\in \N$, diagonal forms on the free $A$-module $A^\ell$ are denoted by $\la a_1,\ldots, a_\ell\ra_\s$ with $a_1,\ldots, a_\ell \in \Sym(A,\s)^\x$.  We call $\ell$ the \df{dimension} of the form. Note that the rank of this form is equal to $\ell n$ and that its dimension 
may not be invariant under Morita equivalence.
If $A$ is a division algebra, all hermitian forms can be expressed up to isometry in diagonal form and rank and dimension are equal.

Assume now that $F$ is a formally real field with space of orderings $X_F$.
Let $P\in X_F$, let $F_P$ denote a real closure of $F$ at $P$ and consider 
\begin{equation}\label{seq}
\xymatrix{
W(A,\s) \ar[r] & W(A\ox_F F_P, \s\ox\id) \ar[r]^--{\CM} & W_\ve(D_P,\vt_P)\ar[r]^--{\sign_P} & \Z,
}
\end{equation}
where the first map is induced by scalar extension, the second map is the isomorphism
 of $W(F_P)$-modules induced by Morita equivalence and 
\begin{itemize}
\item $\sign_P$ is 
the classical signature isomorphism
 if  $\ve=1$ and
$(D_P, \vt_P) \in \{(F_P, \id),$ $(F_P(\sqrt{-1}), -),$ $((-1,-1)_{F_P}, -)\}$ 
(where $-$ denotes conjugation and quaternion conjugation, respectively);

\item $\sign_P\equiv 0$ if $\ve=-1$ and $(D_P, \vt_P) \in \{(F_P, \id),  ((-1,-1)_{F_P}, -), 
((-1,-1)_{F_P}\x (-1,-1)_{F_P}, \widehat{\phantom{x}}), (F_P\x F_P, \widehat{\phantom{x}})
\}$ (where 
$\widehat{\phantom{x}}$ denotes the exchange involution).
\end{itemize}
We call $\Nil[A,\s]:=\{P\in X_F \mid \sign_P \equiv 0\}$ the set of \emph{nil-orderings} of $(A,\s)$. It depends only on the Brauer class of $A$ and the type of $\s$ (orthogonal, symplectic or unitary). In addition it is clopen in $X_F$ \cite[6.5]{AU}. Let $\Xt_F:= X_F \setminus \Nil[A,\s]$.
We recall the definition of $M$-signatures and $H$-signatures, cf. \cite[3.2, 3.9]{AU}:

\begin{defi} Let $h\in W(A,\s)$,  $P\in X_F$ and $\CM$ be as in \eqref{seq}. The \emph{M-signature} of $h$ at $(P,\CM)$ is defined by $\sign_P^\CM h:=\sign_P\CM(h\ox F_P)$
and is independent of the choice of $F_P$ (by \cite[3.3]{AU}).
\end{defi}

If we choose a different Morita map $\CM'$ in \eqref{seq}, then $\sign_P^{\CM'} h =\pm \sign_P^\CM h$, cf. \cite[3.4]{AU}, which prompts the question if there is a way to make the $M$-signature independent of the choice of Morita equivalence. 

\begin{defi}\label{def:ref-form}
A finite tuple $(h_1,\ldots, h_s)$ of hermitian forms over $(A,\s)$ is called a \emph{tuple of reference forms} if for every $P\in \Xt_F$ and Morita map $\CM$ as in \eqref{seq} there exists $i\in \{1,\ldots, s\}$ such that
$\sign^\CM_P h_i\not=0$.
\end{defi}

It follows from \cite[6.4]{AU} that:

\begin{thm}\label{thm1} 
There exists a tuple $H=(h_1,\ldots, h_s)$ of reference forms over $(A,\s)$ such that  
$h_i$ is a diagonal hermitian form of dimension one  for all $i\in \{1,\ldots, s\}$.
\end{thm}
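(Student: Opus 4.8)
The plan is to start from the tuple of reference forms whose existence is asserted by \cite[6.4]{AU}, and then reduce each member to a one-dimensional diagonal form without losing the defining property of Definition~\ref{def:ref-form}. So first I would invoke \cite[6.4]{AU} to obtain \emph{some} finite tuple $H'=(h'_1,\dots,h'_r)$ of reference forms over $(A,\s)$; at this stage the $h'_j$ are arbitrary hermitian forms, of arbitrary rank. The goal is to replace each $h'_j$ by finitely many one-dimensional diagonal forms in such a way that, at every $P\in\Xt_F$ and for every Morita map $\CM$ in \eqref{seq}, at least one of the new forms has non-zero $M$-signature whenever $h'_j$ did.

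The key step is a decomposition: over the division algebra $D$ with involution $\vt$ underlying $(A,\s)$, every hermitian form admits a diagonalization, so after transporting $h'_j$ along a fixed Morita equivalence $W(A,\s)\cong W(D,\vt)$ we may write its image as an orthogonal sum $\la a_{j,1},\dots,a_{j,t_j}\ra_\vt$ with $a_{j,k}\in\Sym(D,\vt)^\times$; pulling these rank-one summands back to $(A,\s)$ produces one-dimensional diagonal forms over $(A,\s)$ (the diagonal entries being the images of the $a_{j,k}$ under the structural isomorphism $A\cong M_n(D)$, placed appropriately). Since the $M$-signature is additive (it is the composite of additive maps in \eqref{seq}), we have $\sign_P^\CM h'_j=\sum_k \sign_P^\CM \la a_{j,k}\ra_\s$ for a suitable normalization, so $\sign_P^\CM h'_j\neq 0$ forces $\sign_P^\CM \la a_{j,k}\ra_\s\neq 0$ for some $k$. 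Collecting all the one-dimensional forms $\la a_{j,k}\ra_\s$ over all $j$ and $k$ into a single tuple $H$ then yields a tuple of reference forms with the desired shape, and one checks the finiteness of $H$ is immediate since each $h'_j$ contributes only finitely many summands.

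The main obstacle I anticipate is bookkeeping around the Morita equivalence rather than anything deep: one must be careful that the passage $W(A,\s)\cong W(D,\vt)$ and back is compatible with \emph{every} choice of $\CM$ appearing in \eqref{seq}, not just the one used to diagonalize. This is handled by the observation already recorded in the excerpt (following \cite[3.4]{AU}) that changing the Morita map only changes the $M$-signature by a sign, together with additivity: a sign change does not affect whether a signature is zero, and it distributes over the orthogonal sum, so the implication ``$\sign_P^\CM h'_j\neq0 \Rightarrow \sign_P^\CM\la a_{j,k}\ra_\s\neq0$ for some $k$'' holds for each $\CM$ separately. A secondary point is to make sure the pulled-back summands really are diagonal forms \emph{of dimension one} over $(A,\s)$ in the sense defined before \eqref{seq}, i.e. that the entries land in $\Sym(A,\s)^\times$; this follows because Morita equivalence sends invertible symmetric elements of $D$ to (conjugates of) invertible symmetric elements of $A=M_n(D)$, and a one-dimensional form over $D$ corresponds to a one-dimensional form over $A$ under the normalization $\ve=\mu=1$ fixed in the Notation section.
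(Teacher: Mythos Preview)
Your approach has a genuine gap stemming from conflating \emph{rank} and \emph{dimension}. Recall from the Notation section that a diagonal form $\la b\ra_\s$ of dimension one over $(A,\s)$ lives on the free module $A$ and has rank $n$ (where $A\cong M_n(D)$), whereas a form $\la a\ra_\vt$ of dimension one over the division algebra $(D,\vt)$ has rank~$1$. Morita equivalence preserves rank, not dimension: the pull-back of $\la a_{j,k}\ra_\vt$ to $(A,\s)$ is a rank-one form on the simple $A$-module $D^n$, \emph{not} a dimension-one diagonal form on $A$. Your parenthetical fix---embedding $a_{j,k}$ into $A$ ``placed appropriately''---does not work either: an element of $D$ is not invertible in $M_n(D)$, and if you pad it, say to $\mathrm{diag}(a_{j,k},1,\ldots,1)\in\Sym(A,\s)^\times$, then the resulting form $\la\mathrm{diag}(a_{j,k},1,\ldots,1)\ra_\s$ has Morita image $\la a_{j,k},1,\ldots,1\ra_\vt$, not $\la a_{j,k}\ra_\vt$, so your additivity identity $\sign_P^\CM h'_j=\sum_k\sign_P^\CM\la a_{j,k}\ra_\s$ fails. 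In short, your construction produces a tuple of \emph{rank-one} reference forms (which is Corollary~2.4), not the dimension-one diagonal forms claimed in Theorem~\ref{thm1}.

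The paper does not attempt any such reduction. The statement is recorded as a direct consequence of \cite[6.4]{AU}: in that reference the tuple $H$ is already constructed out of diagonal forms of dimension one (this is consistent with the remark after Definition~\ref{def:s-sign} that in \cite{AU} the $H$-signature was only defined for such tuples). So the intended ``proof'' here is simply the citation; no decomposition argument is needed or offered.
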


\begin{cor} There exists a tuple $H=(h_1,\ldots, h_r)$ of reference forms over $(A,\s)$ such that  
$h_i$ is a  hermitian form of rank one  for all $i\in \{1,\ldots, r\}$.
\end{cor}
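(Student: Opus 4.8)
The plan is to deduce the corollary from Theorem~\ref{thm1} by converting each dimension-one diagonal form into a rank-one form via Morita equivalence. Recall from the Notation section that a diagonal form $\la a\ra_\s$ on $A^1$ has rank $n$, where $A\cong M_n(D)$, so in general it is not itself of rank one; the point is to replace it by a Morita-equivalent form over $(D,\vt)$, where rank and dimension coincide, and then transport that form back.

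First I would fix the structure isomorphism $(A,\s)\cong(M_n(D),\ad_\Phi)$ coming from Wedderburn theory, together with the resulting Morita equivalence between $(A,\s)$ and $(D,\vt)$, inducing an isomorphism $\Phi_*\colon W(A,\s)\simtoo W(D,\vt)$ of $W(F)$-modules. For each reference form $h_i=\la a_i\ra_\s$ from Theorem~\ref{thm1}, I would take $h_i':=\Phi_*(h_i)\in W(D,\vt)$. Since $D$ is a division algebra, every class in $W(D,\vt)$ is represented by a diagonal (hence rank = dimension) form, and by adding a hyperbolic summand if necessary I may assume $\rk h_i'\ge 1$; writing $h_i'\cong \la d_1,\dots,d_t\ra_\vt$ I would like to argue that one of the rank-one slices $\la d_j\ra_\vt$, transported back via $\Phi_*^{-1}$, already serves as a reference form — but this needs care, so instead it is cleaner to simply pass to the collection of all such rank-one slices over all $i$. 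The new tuple $H'$ is the list, over all $i$ and all $j$, of the forms $\Phi_*^{-1}(\la d_j\ra_\vt)$, each of which has rank one in $W(A,\s)$ because rank is invariant under Morita equivalence.

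The verification that $H'$ is a tuple of reference forms is the crux. Let $P\in\Xt_F$ and let $\CM$ be a Morita map as in~\eqref{seq}. I would compose $\CM$ with $\Phi_*\ox F_P$ to view things consistently: the composite $\CM\circ((\Phi_*)^{-1}\ox F_P)$ is again a Morita map $W(A\ox_F F_P,\s\ox\id)\to W_1(D_P,\vt_P)$, call it $\CM_0$; then for each $i$, $\sign_P^{\CM}h_i=\pm\sign_P^{\CM_0}h_i'$ by \cite[3.4]{AU}, so by Theorem~\ref{thm1} there is an $i$ with $\sign_P^{\CM_0}h_i'\ne 0$. Writing $h_i'\cong\bigperp_j\la d_j\ra_\vt$ (up to hyperbolic forms, which have zero signature), additivity of the $M$-signature gives $\sign_P^{\CM_0}h_i'=\sum_j\sign_P^{\CM_0}\la d_j\ra_\vt$, so some slice $\la d_j\ra_\vt$ has nonzero $M$-signature at $(P,\CM_0)$; transporting back, the corresponding member $\Phi_*^{-1}(\la d_j\ra_\vt)$ of $H'$ has $\sign_P^{\CM}(\Phi_*^{-1}(\la d_j\ra_\vt))=\pm\sign_P^{\CM_0}\la d_j\ra_\vt\ne 0$, as required by Definition~\ref{def:ref-form}.

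The main obstacle I anticipate is bookkeeping around the choices of Morita equivalences: one must be sure that every Morita map $\CM$ in~\eqref{seq} factors (up to sign, which is all that matters for non-vanishing of signatures) through the fixed $\Phi_*\ox F_P$, so that the reduction to Theorem~\ref{thm1} is legitimate for \emph{all} $\CM$, not just one. This is exactly what \cite[3.4]{AU} provides, so the argument goes through; the only other minor point is to check that the rank-one slices $\la d_j\ra_\vt$ remain rank one after applying $\Phi_*^{-1}$, which is immediate from the Morita-invariance of rank recalled in the Notation section.
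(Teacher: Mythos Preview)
Your proof is correct and reaches the goal by a genuinely different route from the paper. The paper's argument is more topological: fixing a reference tuple $H$, it notes that $W(A,\s)$ is additively generated by rank-one forms, so for each $P\in\Xt_F$ there is some rank-one $h_P$ with $\sign^H_P h_P>0$; then compactness of $\Xt_F$ (with the continuity of the total $H$-signature) extracts a finite subcollection $h_1,\ldots,h_r$. Your argument instead decomposes each of the finitely many $h_i$ from Theorem~\ref{thm1} into finitely many rank-one summands once and for all, so finiteness of the new tuple is automatic and no topology enters. This is more constructive and more elementary, at the cost of the explicit bookkeeping through the Morita equivalence $\Phi_*$.

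Two minor remarks. First, the detour through $\CM_0$ is unnecessary: since $h_i=\sum_j\Phi_*^{-1}(\la d_j\ra_\vt)$ already holds in $W(A,\s)$, additivity of $\sign_P^\CM$ immediately gives that $\sign_P^\CM h_i\ne 0$ forces some $\sign_P^\CM\Phi_*^{-1}(\la d_j\ra_\vt)\ne 0$. Second, your $\CM_0=\CM\circ((\Phi_*)^{-1}\ox F_P)$ as written has domain $W(D\ox_F F_P,\vt\ox\id)$, not $W(A\ox_F F_P,\s\ox\id)$; and by the commutativity in Lemma~\ref{Meq-lemma} one in fact has $\sign_P^\CM h_i=\sign_P^{\CM_0}h_i'$ exactly, not just up to sign, so the appeal to \cite[3.4]{AU} is not needed at that step.
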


\begin{proof}
Let $P\in \Xt_F$. Then some form in $W(A,\s)$ has non-zero $H$-signature at $P$. Therefore
some form $h_P$ of rank one has non-zero $H$-signature at $P$, since $W(A,\s)$ is 
generated additively by forms of rank one. We may assume that $\sign^H_P h_P>0$
 and thus
\[\Xt_F= \bigcup_{P\in \Xt_F} \{Q \in \Xt_F \mid \sign^H_Q h_P >0\}=\bigcup_{i=1}^r \{Q \in \Xt_F \mid \sign^H_Q h_i >0\},\]
by compactness of $\Xt_F$ and continuity of the maps $\sign^H h_P$ (cf. \cite[7.2]{AU})  and where the $h_i$ have rank one. 
\end{proof}

\begin{defi}\label{def:s-sign} 
Let $H=(h_1,\ldots, h_s)$ be a tuple of reference forms over $(A,\s)$.
Let $h\in W(A,\s)$ and  $P\in X_F$. We define the \emph{$H$-signature of $h$ at $P$} as  follows:
If $P\in \Nil[A,\s]$, define
$\sign_P^H h:=0$. 
If $P\not\in \Nil[A,\s]$, let $i\in \{1,\ldots, s\}$ be the least integer such that $\sign_P^\CM h_i\not=0$, let $\delta \in \{-1,1\}$ be the sign of $\sign_P^\CM h_i$ and define
\[\sign_P^H h:= \delta \sign_P^\CM h.  \]
\end{defi}

The $H$-signature at $P$, $\sign_P^H :W(A,\s)\to \Z$,
 is independent of the choice of Morita equivalence $\CM$ and is a refinement of the definition of signature in \cite{BP}, the latter not being
defined when $\s$ becomes hyperbolic over $A\ox_F F_P$, cf. \cite[3.11]{AU}. 
It is a morphism of additive groups
 that respects the $W(F)$-module structure of $W(A,\s)$, cf. \cite[3.6]{AU}.

Note that if $h_0$ is such that $\sign_P^H h_0 \not= 0$
  for every $P \in \Xt_F$, then $h_0$ is a reference form and
 $\sign^{(h_0)}_P h_0>0$ for every $P\in \Xt_F$.

We remark that \eqref{def:s-sign} is slightly more general than \cite[3.9]{AU}, 
where the definition was given only for reference tuples consisting of 
diagonal forms of dimension one.  
In the next section we will show that any tuple $H$ of reference forms over $(A,\s)$ can be replaced by just one reference form $h_0$ such that $\sign_P^H h=\sign_P^{(h_0)}h$ for all $h\in W(A,\s)$ and all $P\in X_F$.

We finish this section by 
listing some properties of $H$-signatures: 

\begin{thm}\mbox{}
\begin{enumerate}[$(i)$]
\item Let $h$ be a hyperbolic form over $(A,\s)$, then
$\sign^H_P h=0$.

\item Let $h_1, h_2 \in W(A,\s)$, then
$\sign^H_P (h_1\perp h_2)=\sign^H_P h_1 + \sign^H_P h_2$.

\item Let $h\in W(A,\s)$  and $q\in W(F)$, then
$\sign_P^H (q\cdot h) = \sign_P q \cdot \sign_P^H h$.

\item Let $h\in W(A,\s)$. The total H-signature of $h$, $\sign^H h:X_F\to \Z, P\mapsto \sign_P^H h$, is continuous.

\item \textup{(}Pfister's local-global principle\textup{)} Let $h\in W(A,\s)$. Then $h$ is a torsion form if and only if $\sign^H_P h=0$ for all $P\in X_F$.

\item \textup{(}Going-up\textup{)} Let $h\in W(A,\s)$ and let $L/F$ be an algebraic extension of ordered fields. Then 
\[\sign_Q^{H\ox L} (h\ox L)=\sign_{Q\cap F}^H h\] 
for all $Q\in X_L$. 

\item \textup{(}Going-down: Knebusch trace formula\textup{)} 
Let $L/F$ be a finite extension of ordered fields and assume $P\in X_F$ extends to $L$.
Let $h\in W(A\ox_F L, \s\ox\id)$. Then
\[\sign_P^H(\mathrm{Tr}^*_{A\ox_F L} h) = \sum_{P\subseteq Q \in X_L} \sign_Q^{H\ox L} h,\]
 where $\mathrm{Tr}^*_{A\ox_F L} h$ denotes the Scharlau transfer induced by the $A$-linear homomorphism $\id_A \ox \mathrm{Tr}_{L/F}: A\ox_F L\to A$.
\end{enumerate}
\end{thm}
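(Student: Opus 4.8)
The plan is to reduce the whole statement to the special case already settled in \cite{AU}. By Theorem~\ref{thm1} there is a tuple $H'=(h_1',\dots,h_t')$ of reference forms over $(A,\s)$ consisting of diagonal hermitian forms of dimension one; for such a tuple Definition~\ref{def:s-sign} coincides with \cite[3.9]{AU}, so $(i)$--$(vii)$ with $H$ replaced by $H'$ are precisely the statements proved in \cite{AU} (together with \cite{LU} for $(v)$). Writing $H=(h_1,\dots,h_s)$ for the reference tuple under consideration, it therefore suffices to compare $\sign_P^H$ with $\sign_P^{H'}$.

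First I would record a comparison lemma: for every $P\in X_F$ there is $\eta_P\in\{-1,1\}$ with $\sign_P^H h=\eta_P\,\sign_P^{H'} h$ for all $h\in W(A,\s)$. If $P\in\Nil[A,\s]$ both sides are $0$ and we put $\eta_P:=1$. If $P\notin\Nil[A,\s]$, fix any Morita map $\CM$ as in \eqref{seq}; by Definition~\ref{def:s-sign} one has $\sign_P^H h=\delta_H(P)\,\sign_P^\CM h$ and $\sign_P^{H'} h=\delta_{H'}(P)\,\sign_P^\CM h$ with $\delta_H(P),\delta_{H'}(P)\in\{-1,1\}$ independent of $h$, so $\eta_P:=\delta_H(P)\delta_{H'}(P)$ works, and since $\sign_P^H$ and $\sign_P^{H'}$ do not depend on $\CM$ neither does $\eta_P$. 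Granting this, $(i)$, $(ii)$, $(iii)$ and $(v)$ are immediate: a hyperbolic form is the zero class of $W(A,\s)$, the common factor $\eta_P$ passes through the additive identity of $(ii)$ and the $W(F)$-bilinear identity of $(iii)$ valid for $\sign_P^{H'}$, and $\sign_P^H h=0$ for all $P$ iff $\sign_P^{H'} h=0$ for all $P$ iff $h$ is torsion.

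For $(iv)$ I would show that $\eta$ is locally constant. Since $\Nil[A,\s]$ is clopen in $X_F$ \cite[6.5]{AU} and $\sign_P^H h=0$ there, it suffices to work on $\Xt_F$. For each $i$ the set $\{P\in\Xt_F:\sign_P^\CM h_i\ne0\}=\{P\in\Xt_F:\sign_P^{H'} h_i\ne0\}$ is clopen in $\Xt_F$, because $\sign^{H'} h_i$ is continuous (property $(iv)$ for $H'$) and $\Z$ is discrete; hence the subset $W_i\subseteq\Xt_F$ on which $i$ is the least index with $\sign_P^\CM h_i\ne0$ is clopen, and by Definition~\ref{def:ref-form} the $W_i$ cover $\Xt_F$. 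On $W_i$, using $\delta_{H'}(P)^2=1$, one computes $\eta_P=\sign(\sign_P^{H'} h_i)$, which is locally constant there because $\sign^{H'} h_i$ is continuous and nowhere zero on $W_i$. Thus $\eta$ is locally constant on $\Xt_F$ and $\sign^H h=\eta\cdot\sign^{H'} h$ is continuous on $X_F$.

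Finally, $(vi)$ and $(vii)$ would be obtained by running the comparison over $L$. One first checks that $H\ox L$ is a tuple of reference forms over $(A\ox_F L,\s\ox\id)$: since $\Nil$ depends only on the Brauer class of $A$ and the type of $\s$, for an algebraic extension $L/F$ one has $Q\in\Nil[A\ox_F L,\s\ox\id]$ iff $Q\cap F\in\Nil[A,\s]$, and going-up for $M$-signatures from \cite{AU} then transports the non-vanishing condition from $Q\cap F$ to $Q$. The comparison lemma over $L$ produces signs $\eta_Q$, and the compatibility of the Morita maps under base change used in \cite{AU} (in the proofs of going-up and of the trace formula there) gives $\eta_Q=\eta_{Q\cap F}$; combining this with $(vi)$, $(vii)$ for $H'$ and the comparison identity yields $(vi)$, $(vii)$ for $H$. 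The main obstacle is $(iv)$: one must partition $\Xt_F$ into the clopen cells $W_i$ and exploit continuity for the special tuple $H'$ to pin down the comparison sign; once that is in place, the base-change bookkeeping for $(vi)$--$(vii)$ is routine given the Morita compatibility already established in \cite{AU}.
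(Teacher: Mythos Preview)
Your argument is correct, but it does substantially more work than the paper's own proof. The paper simply cites \cite[3.6, 7.2, 8.1]{AU} for $(i)$--$(iv)$ and $(vii)$, cites \cite[4.1]{LU} for $(v)$, and supplies a two-line $M$-signature computation for $(vi)$; it treats the passage from the diagonal dimension-one reference tuples of \cite[3.9]{AU} to the more general tuples of Definition~\ref{def:s-sign} as immediate, without introducing any comparison sign.

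What you do differently is make that passage explicit: you fix a special tuple $H'$ as in Theorem~\ref{thm1}, introduce the sign $\eta_P\in\{-1,1\}$ with $\sign_P^H=\eta_P\,\sign_P^{H'}$, and then push all seven statements through this comparison. This is essentially the content of Proposition~\ref{h_zero}$(iii)$, which the paper proves \emph{after} the present theorem (and whose proof of continuity of the comparison sign in fact leans on part~$(iv)$ of this theorem). Your route avoids that forward reference by deriving the local constancy of $\eta$ directly from the continuity of $\sign^{H'}$ already available in \cite{AU}, via the clopen partition $\{W_i\}$ of $\Xt_F$. For $(vi)$ and $(vii)$ your identification $\eta_Q=\eta_{Q\cap F}$ is exactly what the paper's $M$-signature computation for $(vi)$ yields once one uses a real closure $L_Q$ of $L$ at $Q$ as a real closure of $F$ at $Q\cap F$. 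So both approaches are valid; yours is more self-contained about the generalized definition, while the paper's is terser and relies on the reader seeing that the proofs in \cite{AU} are insensitive to the particular reference tuple.
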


\begin{proof}  See \cite[4.1]{LU} for $(v)$ and \cite[3.6, 7.2, 8.1]{AU} for the other statements, except for $(vi)$ which is contained implicitly in \cite{AU}. For $(vi)$ it suffices to provide an argument for 
$M$-signatures: let $h\in W(A,\s)$ and let $P=Q\cap F$, then
\[\sign_Q^\CM (h\ox L)= \sign \CM \bigl( (h\ox L)\ox L_Q\bigr) = \sign \CM (h\ox L_Q)
=\sign_P^\CM h,\]
where $L_Q$ is a real closure of $L$ at $Q$ and therefore a real closure of $F$ at $P$.
\end{proof}

\section{A Reference Tuple of Length One}

We equip $X_F$ with the Harrison topology and $\Z$ and $\{-1,1\}$ each with the discrete topology. 

\begin{lemma}\label{induction}
Let $H$ be any tuple of reference forms over $(A,\s)$, let
$h_1, \ldots, h_\ell$ be hermitian forms over $(A, \sigma)$ and let
\[V=  \{P \in X_F \mid \exists i\in \{1,\ldots, \ell\}\text{ such that } \sign_P^H h_i\not=0\}.\]
Then there exists a hermitian form $h$ over $(A, \sigma)$ such that 
\[V=\{P \in X_F \mid  \sign_P^H h\not=0\}.\]
\end{lemma}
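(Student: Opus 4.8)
The plan is to argue by induction on $\ell$, the case $\ell=1$ being trivial (take $h:=h_1$). For the inductive step we may assume there is a hermitian form $h'$ over $(A,\s)$ with $\{P\in X_F\mid \sign_P^H h'\neq 0\}=V':=\{P\in X_F\mid \exists\, i\le \ell-1,\ \sign_P^H h_i\neq 0\}$; writing $W:=\{P\in X_F\mid \sign_P^H h_\ell\neq 0\}$, it then suffices to produce a single form $h$ with $\{P\mid \sign_P^H h\neq 0\}=V'\cup W$.

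A preliminary remark: since $P\mapsto \sign_P^H g$ is continuous and $\Z$ carries the discrete topology (property $(iv)$ above), the set $\{P\mid \sign_P^H g\neq 0\}$ is clopen in $X_F$ for every hermitian form $g$; the same argument applied to $P\mapsto\sgn(\sign_P^H g)$ shows that $\{P\mid\sign_P^H g>0\}$ and $\{P\mid\sign_P^H g<0\}$ are clopen as well. In particular $V'$, $W$, and the pieces $V'_{\pm}:=\{P\in V'\mid \pm\sign_P^H h'>0\}$, $W_{\pm}:=\{P\in W\mid \pm\sign_P^H h_\ell>0\}$ are all clopen.

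The form $h'\perp h_\ell$ need not work, because the two signatures may cancel on $V'\cap W$. To circumvent this I would first rescale each summand by a quadratic form so as to make it nonnegative on its support, using $\sign_P^H(q\cdot g)=\sign_P q\cdot\sign_P^H g$ (property $(iii)$). The key auxiliary fact is that for every clopen $C\subseteq X_F$ there is $\tau_C\in W(F)$ with $\sign_P\tau_C>0$ for $P\in C$ and $\sign_P\tau_C=0$ for $P\notin C$: indeed, since $X_F$ is a Boolean space with subbasis of clopen sets $\{H(a)\mid a\in F^\times\}$, where $H(a)=\{P\mid a\in P\}$ and $H(-a)=X_F\setminus H(a)$, the set $C$ is a finite union $\bigcup_j\bigcap_i H(a_{ij})$ of finite intersections of Harrison sets, and by multiplicativity of signatures under $\otimes$ the form $\tau_C:=\sum_j\bigotimes_i\langle 1,a_{ij}\rangle$ has $\sign_P\tau_C=\sum_j\prod_i\sign_P\langle 1,a_{ij}\rangle$, a sum of terms lying in $\{0\}\cup\{2^n\mid n\in\N\}$ which is positive precisely when $P\in C$ (the degenerate cases $C=\emptyset$ and $C=X_F$ being covered by $\tau_C=0$ and $\tau_C=\langle 1\rangle$). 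Now put $q_1:=\tau_{V'_+}-\tau_{V'_-}\in W(F)$; since $V'_+\cap V'_-=\emptyset$ this gives $\sign_P q_1>0$ on $V'_+$, $\sign_P q_1<0$ on $V'_-$, and $\sign_P q_1=0$ off $V'$, whence $\sign_P^H(q_1\cdot h')=\sign_P q_1\cdot\sign_P^H h'>0$ for $P\in V'$ and $=0$ for $P\notin V'$. Define $q_2:=\tau_{W_+}-\tau_{W_-}$ analogously, so that $\sign_P^H(q_2\cdot h_\ell)>0$ on $W$ and $=0$ off $W$. Finally set $h:=(q_1\cdot h')\perp(q_2\cdot h_\ell)$: by property $(ii)$, $\sign_P^H h=\sign_P^H(q_1\cdot h')+\sign_P^H(q_2\cdot h_\ell)$ is a sum of two nonnegative integers, hence nonzero exactly when at least one of them is positive, i.e. exactly when $P\in V'\cup W=\bigcup_{i=1}^\ell\{P\mid\sign_P^H h_i\neq 0\}=V$, as required. \textbf{The only non-routine ingredient} is the construction of the forms $\tau_C$ realizing, up to a positive scalar, the characteristic function of an arbitrary clopen set; this rests on $X_F$ being a Boolean space and on the elementary behaviour of signatures under $\otimes$ and $\perp$, while everything else is bookkeeping with the listed properties of $H$-signatures.
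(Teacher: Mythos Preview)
Your proof is correct. Both you and the paper argue by induction on $\ell$ and reduce to combining $h'$ (covering $V'$) with $h_\ell$ using auxiliary quadratic forms, but the inductive step is organized differently.

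The paper's argument is more economical: it takes a single quadratic form $q$ with $\{P\mid \sign_P q=0\}=V'$ (quoting \cite[VIII, 6.10]{Lam} directly) and sets $h:=h'+q\cdot h_\ell$. On $V'$ the second summand vanishes, so $\sign_P^H h=\sign_P^H h'\neq 0$; on $V\setminus V'$ the first summand vanishes while $\sign_P q\neq 0$ and $\sign_P^H h_\ell\neq 0$. No control of signs is needed, only of supports. Your approach instead forces both summands to have \emph{nonnegative} $H$-signature everywhere, so that their sum vanishes exactly where both do; this costs two auxiliary forms $q_1,q_2$ and a finer sign analysis (the partitions $V'_\pm$, $W_\pm$), and you rebuild the representability of clopen sets by hand via Pfister forms rather than citing it. What you gain is a self-contained construction of the $\tau_C$; what the paper gains is brevity and avoidance of the sign bookkeeping.
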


\begin{proof}
  By induction on $\ell$. The case $\ell=1$ is clear. Assume that $\ell>1$ and let
\[  V' = \{P \in X_F \mid \exists i\in \{1,\ldots, \ell-1\}\text{ such that } \sign_P^H h_i\not=0\}.\]
  By induction there exists a hermitian form $h'$ such that 
  $V'= \{P \in X_F \mid  \sign_P^H h'\not=0\}$. Note that $V'$ is clopen in $X_F$ since the total
  signature map $\sign^H h'$ is continuous \cite[7.2]{AU}. Thus,
  by \cite[VIII, 6.10]{Lam} there exists a quadratic form $q$ over $F$
  such that $\{P \in X_F \mid  \sign_P q=0\} = V'$. Let
  \[h := h' + q \cdot h_{\ell}.\]
  Let $P \in V$. We have  to consider two cases:
  \begin{itemize}
    \item $P \in V'$. Then $\sign_P^H h' \not = 0$ while $\sign_P q =0$. Thus
      $\sign_P^H h =\sign_P^H h'+ (\sign_P q)(\sign_P^H h_\ell) \not = 0$.
    \item $P \in V \setminus V'$. Then $\sign_P^H h' = 0$ and $\sign_P q \not =
      0$. We also have $\sign_P^H h_{\ell} \not = 0$ by definition of $V$.
      Therefore $\sign_P^H h \not = 0$.
  \end{itemize}
We conclude that   $V\subseteq \{P \in X_F \mid  \sign_P^H h\not=0\}$. For the reverse inclusion, let $P\in X_F$ be such that $\sign_P^H h\not=0$. If $\sign_P^H h'\not=0$, then $P\in V'\subseteq V$.
If $\sign_P^H h'=0$  then $\sign_P^H(q\cdot  h_{\ell})\not=0$, which implies $\sign_P^H h_{\ell}\not=0$ and thus $P\in V$.
\end{proof}

\begin{prop}\label{single-form} Let $H = (h_1, \ldots, h_s)$ be a tuple of reference forms over $(A,\s)$.
There exists a hermitian form $h_0$ over $(A,\sigma)$ such that $\sign_P^H h_0 > 0$
  for every $P \in \Xt_F$.
\end{prop}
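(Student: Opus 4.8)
The plan is to reduce, by means of Lemma~\ref{induction}, to a single hermitian form whose $H$-signature is merely nonzero on all of $\Xt_F$, and then to repair its sign by scaling with a suitable quadratic form over $F$. First I would apply Lemma~\ref{induction} with $h_1,\dots,h_s$ taken to be the forms of the reference tuple $H$ itself (so $\ell=s$). The resulting set $V=\{P\in X_F\mid\exists\,i,\ \sign_P^H h_i\neq 0\}$ equals $\Xt_F$: on the one hand $\sign_P^H h_i=0$ for every $P\in\Nil[A,\s]$ by Definition~\ref{def:s-sign}, so $V\subseteq\Xt_F$; on the other hand, for $P\in\Xt_F$ the defining property of a reference tuple (Definition~\ref{def:ref-form}) provides some $i$ with $\sign_P^{\CM}h_i\neq 0$, whence $\sign_P^H h_i=\pm\sign_P^{\CM}h_i\neq 0$ and $P\in V$. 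Lemma~\ref{induction} thus yields a hermitian form $h$ over $(A,\s)$ with $\{P\in X_F\mid\sign_P^H h\neq 0\}=\Xt_F$.

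Next I would partition $\Xt_F$ by the sign of $\sign_P^H h$ and set $U:=\{P\in X_F\mid\sign_P^H h>0\}$. Since the total $H$-signature map is continuous and $\Z$ is discrete, $U$ and its complement $X_F\setminus U=\Nil[A,\s]\cup\{P\in\Xt_F\mid\sign_P^H h<0\}$ are both clopen. Suppose for the moment that we have a quadratic form $q$ over $F$ with $\sign_P q>0$ for $P\in U$ and $\sign_P q<0$ for $P\in X_F\setminus U$. Put $h_0:=q\cdot h$. Then for $P\in\Xt_F$ we have $\sign_P^H h_0=\sign_P q\cdot\sign_P^H h$, a product of two nonzero integers of the same sign (both positive if $P\in U$, both negative if $P\in\Xt_F\setminus U$), hence $\sign_P^H h_0>0$ for every $P\in\Xt_F$, which is the assertion.

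It remains to produce $q$, and this is the only step that requires a little care. By \cite[VIII,~6.10]{Lam} --- the result already invoked in the proof of Lemma~\ref{induction} --- there is a quadratic form $q_1$ over $F$ with $\{P\in X_F\mid\sign_P q_1=0\}=X_F\setminus U$. Then $q:=(\la 1,1\ra\ox q_1\ox q_1)\perp\la -1\ra$ satisfies $\sign_P q=2(\sign_P q_1)^2-1$, which equals $-1<0$ when $P\notin U$ and is $\geq 1>0$ when $P\in U$, because there $\sign_P q_1\neq 0$ forces $(\sign_P q_1)^2\geq 1$. (Alternatively one can assemble $q$ directly from scaled Pfister forms attached to a finite cover of $U$ by basic Harrison sets.) The mild subtlety, and the reason the form $q_1\ox q_1$ is first doubled before a negative square is subtracted, is that one needs \emph{strict} positivity on $U$: the signature of $q_1\ox q_1$ is nonnegative and vanishes exactly off $U$, so the passage to something strictly negative off $U$ must not be allowed to drive the value on $U$ down to $0$. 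With $q$ in hand the proposition follows as explained above.
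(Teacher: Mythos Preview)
Your proof is correct and follows the same two-step strategy as the paper: first apply Lemma~\ref{induction} to the tuple $H$ itself to obtain a single form with nonvanishing $H$-signature on $\Xt_F$, then repair the sign by multiplying by a suitable quadratic form $q$. The only cosmetic difference is in producing $q$: the paper invokes \cite[VIII,~6.9]{Lam} directly (applied to the continuous $\{-1,1\}$-valued function determined by the sign of $\sign^H h$), whereas you build $q$ explicitly from a form $q_1$ supplied by \cite[VIII,~6.10]{Lam} via $q=2\,q_1\ox q_1\perp\la -1\ra$; both routes are standard and yield the same conclusion.
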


\begin{proof} For every $P \in \Xt_F$ one of  
$\sign_P^H h_1, \ldots, \sign_P^H h_s$ is non-zero.
Therefore
\[\Xt_F=\{P \in X_F \mid \exists i\in \{1,\ldots, s\}\text{ such that } \sign_P^H h_i\not=0\}\]
and  by \eqref{induction}  there exists a hermitian form $h'$ over $(A,\s)$ such that 
$\sign_P^H h' \not= 0$  for every $P \in \Xt_F$.
Consider the disjoint clopen subsets $S:=\{P\in X_F \mid \sign^H_P h' >0\} \cup \Nil[A,\s]$ 
and $T:=\{P\in X_F \mid \sign^H_P h' <0\} $ of $X_F$. The function $f:X_F \to \Z$ defined by $f=1$ on $S$ and $f=-1$ on $T$ is continuous, so by
\cite[VIII, 6.9]{Lam}  there exists a quadratic form $q$ over $F$
such that $\sign q>0$ on $S$ and
$\sign q<0$ on $T$. Let $h_0=q\cdot h'$. Then
 $\sign^H_Ph_0 >0$ for every $P\in \Xt_F$.
 \end{proof}

The forms $h_1,\ldots, h_s$ in \eqref{thm1} are diagonal.
Thus the form $h_0$ in \eqref{single-form}
can also be chosen to be diagonal, which can be seen 
from the proof of \eqref{induction}.

\begin{prop}\label{h_zero}
Let $H$ be a tuple of reference forms over $(A,\s)$.
\begin{enumerate}[$(i)$]
\item Let $h_0$ be as in \eqref{single-form}. Then  $\sign^H =
\sign^{(h_0)}$.

\item Let $f$ be a continuous map from $X_F$ to $\{-1,1\}$.
 There exists $h_f \in W(A,\s)$ such that
$\sign^{(h_f)} = f \cdot \sign^H$.

\item Let $H'$ be another  tuple of reference forms over $(A,\s)$. There exists a 
continuous map $f$ from $X_F$ to $\{-1,1\}$ such that
$\sign^{H'} = f \cdot \sign^H$.
\end{enumerate}
\end{prop}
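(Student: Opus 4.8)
The plan is to reduce all three parts to one bookkeeping device. For every $P\in\Xt_F$ and every Morita map $\CM$ as in \eqref{seq}, Definition~\ref{def:s-sign} presents $\sign^H_P\colon W(A,\s)\to\Z$ as $\delta^\CM_H(P)\cdot\sign^\CM_P$ for a sign $\delta^\CM_H(P)\in\{-1,1\}$ that depends on $P$ and $\CM$ but \emph{not} on the form, namely the sign of $\sign^\CM_P h_i$ for the least reference form $h_i$ with $\sign^\CM_P h_i\neq 0$; and $\sign^H_P\equiv 0$ on $\Nil[A,\s]$. The same holds for a single reference form $h_0$, with $\delta^\CM_{(h_0)}(P)=\sgn\sign^\CM_P h_0$. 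Since replacing $\CM$ by another Morita map multiplies $\sign^\CM_P$ by a single sign (depending only on $P$ and the two maps) by \cite[3.4]{AU}, every expression formed below is independent of the choice of $\CM$, just as in the proof that $\sign^H_P$ is well defined.

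For $(i)$ I would fix $P\in\Xt_F$ and a Morita map $\CM$. The form $h_0$ of \eqref{single-form} satisfies $\sign^H_P h_0>0$ and $\sign^{(h_0)}_P h_0>0$ (the latter by the remark following Definition~\ref{def:s-sign}). Writing these as $\delta^\CM_H(P)\sign^\CM_P h_0>0$ and $\delta^\CM_{(h_0)}(P)\sign^\CM_P h_0>0$ forces $\delta^\CM_H(P)=\delta^\CM_{(h_0)}(P)=\sgn\sign^\CM_P h_0$, hence $\sign^H_P=\sign^{(h_0)}_P$; this trivially persists on $\Nil[A,\s]$, giving $\sign^H=\sign^{(h_0)}$.

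For $(ii)$ I would first invoke $(i)$ to replace $H$ by a single form $h_0$ as in \eqref{single-form}. The sets $f^{-1}(1)$ and $f^{-1}(-1)$ are disjoint, clopen and cover $X_F$, so exactly as in the proof of \eqref{single-form} (via \cite[VIII, 6.9]{Lam}) there is a quadratic form $q$ over $F$ with $\sgn\sign_P q=f(P)$ for all $P\in X_F$. Set $h_f:=q\cdot h_0$. Using that the Morita isomorphism is $W(F_P)$-linear one gets $\sign^\CM_P h_f=\sign_P q\cdot\sign^\CM_P h_0$, which is nonzero on $\Xt_F$ since $\sign^\CM_P h_0\neq 0$ there; so $h_f$ is a reference form and $\delta^\CM_{(h_f)}(P)=f(P)\,\delta^\CM_{(h_0)}(P)$. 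Then for $P\in\Xt_F$ and any $h$,
\[
\sign^{(h_f)}_P h=\delta^\CM_{(h_f)}(P)\sign^\CM_P h=f(P)\,\delta^\CM_{(h_0)}(P)\sign^\CM_P h=f(P)\sign^{(h_0)}_P h=f(P)\sign^H_P h,
\]
and on $\Nil[A,\s]$ both sides vanish, so $\sign^{(h_f)}=f\cdot\sign^H$.

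For $(iii)$ I would again take $h_0$ as in \eqref{single-form} for $H$. Since Definition~\ref{def:ref-form} is a statement about $M$-signatures only, the same $h_0$ has $\sign^\CM_P h_0\neq 0$ for all $P\in\Xt_F$ and all $\CM$, hence $\sign^{H'}_P h_0\neq 0$ on $\Xt_F$, and $P\mapsto\sign^{H'}_P h_0$ is continuous by \cite[7.2]{AU}. As $\Xt_F$ and $\Nil[A,\s]$ are clopen, the map $f\colon X_F\to\{-1,1\}$ with $f\equiv 1$ on $\Nil[A,\s]$ and $f(P):=\sgn\sign^{H'}_P h_0$ on $\Xt_F$ is continuous. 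Using $(i)$ in the form $\sgn\sign^\CM_P h_0=\delta^\CM_H(P)$ one gets $f(P)=\delta^\CM_{H'}(P)\,\delta^\CM_H(P)$ on $\Xt_F$, whence
\[
\sign^{H'}_P h=\delta^\CM_{H'}(P)\sign^\CM_P h=f(P)\,\delta^\CM_H(P)\sign^\CM_P h=f(P)\sign^H_P h
\]
for every $h$, with both sides vanishing on $\Nil[A,\s]$. The one genuinely delicate point in the whole argument — and the step I expect to need most care — is keeping the sign $\delta^\CM_{(\,\cdot\,)}(P)$ consistent across changes of the Morita map; this is exactly the issue already settled in \cite[3.4]{AU}, and I would cite it explicitly to justify that $f$, $\delta^\CM_H(P)$, and the displayed identities do not depend on the chosen $\CM$.
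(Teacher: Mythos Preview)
Your proof is correct and follows essentially the same route as the paper's own argument: reduce to the single reference form $h_0$ of \eqref{single-form}, use \cite[VIII, 6.9]{Lam} to produce the auxiliary quadratic form $q$ in $(ii)$, and in $(iii)$ define $f$ on $\Xt_F$ as the sign of $\sign^{H'}_P h_0$. The only difference is cosmetic---you introduce the bookkeeping symbol $\delta^\CM_H(P)$ and track signs through it, whereas the paper computes the same signs in line---but the substance, the references cited, and the order of the steps are the same.
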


\begin{proof} 
$(i)$ By \eqref{single-form} there exists  $h_0 \in W(A,\s)$ such that $\sign_P^H 
h_0 >0$ for all $P\in \Xt_F$.
Let $h\in W(A,\s)$.
  Considering that, for a given $P \in X_F$, $\sign^H_P$ is a special case of
  $\sign^{\CM}_P$, the definition of $\sign^{(h_0)}_P h$ given in \eqref{def:s-sign} 
  can be expressed as follows:
  \[\sign^{(h_0)}_P h =
      \begin{cases}
        \sign^H_P h & \text{if } P\in \Xt_F,\\
        0 & \text{if } P\in \Nil[A,\s],
      \end{cases}\]
which proves the result.

$(ii)$ By $(i)$ there exists $h_0 \in W(A,\s)$ such that $\sign^{(h_0)} = \sign^H$
with $\sign^H_P h_0 > 0$ for every $P \in \Xt_F$. 
By \cite[VIII, 6.9]{Lam} there exists a quadratic form $q$ over $F$
such that $\sign q > 0$ on $f^{-1}(1)$ and
$\sign q < 0$ on $f^{-1}(-1)$. 
Let $h_f = q\cdot h_0$.   Then for $h \in W(A,\s)$ we have
\begin{align*}
    \sign^{(h_f)}_P h & = 
      \begin{cases}
        \sign^H_P h & \text{if } \sign_P^H(q\cdot h_0) > 0, \text{ i.e. if } f(P) = 1\\
        -\sign^H_P h & \text{if } \sign_P^H(q\cdot h_0) < 0, \text{ i.e. if } f(P) = -1\\
      \end{cases}\\
    & = f(P) \sign^H_P(h).
\end{align*}

$(iii)$ Let $P \in \Xt_F$. By \cite[3.4]{AU} there exists $\delta \in \{-1,1\}$ such that
$\sign_P^H = \delta \sign_P^{H'}$. Let $h_0 \in W(A,\s)$ be such that $\sign^H = \sign^{(h_0)}$. Then $\sign_P^H h_0 >0$ and $\sign_P^H h_0= \delta \sign_P^{H'} h_0$, so $\delta = \sgn\bigl(\sign_P^{H'}(h_0)\bigr)$, where $\sgn$ denotes the sign function. Define $f: X_F \to \{-1,1\}$ by
 \[f(P) =
      \begin{cases}
        \sgn\bigl(\sign_P^{H'}(h_0)\bigr) & \text{if } P\in \Xt_F,\\
        1 & \text{if } P\in \Nil[A,\s].
      \end{cases}\]
Then $f$ is continuous and $\sign^H = f\cdot \sign^{H'}$.      
\end{proof}

\section{Behaviour under Morita Equivalence}

\begin{lemma}\label{Meq-lemma}
Let $(A,\s)$ and $(B,\tau)$ be $F$-algebras with involution.
Let $L/F$ be a field extension. Any Morita equivalence of  $(A,\s)$ and $(B,\tau)$ extends to a Morita equivalence
of $(A\ox_F L, \s\ox \id)$ and $(B\ox_F L, \tau \ox\id)$ and the induced diagram of Witt groups \textup{(}i.e. where the horizontal maps are the
isomorphisms induced by the Morita equivalences and the vertical maps  are the canonical scalar extension maps\textup{)}
\begin{equation}\label{diag1}
\begin{split}
\xymatrix{
W_\ve(A\ox_F L, \s\ox\id)\ar[r]^--{\sim} & W_{\ve\mu}(B\ox_F L, \tau\ox\id)\\
W_\ve(A,\s) \ar[r]^--{\sim} \ar[u] & W_{\ve\mu}(B,\tau) \ar[u]
}
\end{split}
\end{equation}
commutes. Here $\mu=-1$ if $\s$ and $\tau$ are of the first kind and opposite type and $\mu=1$ otherwise.
\end{lemma}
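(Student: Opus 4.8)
The plan is to recall the concrete description of a Morita equivalence of algebras with involution and observe that every datum in it extends to $L$ in the obvious way, after which the commutativity of \eqref{diag1} becomes a diagram chase at the level of representing forms. Concretely, a Morita equivalence $(A,\s)\simm(B,\tau)$ is given by a Morita context — a progenerator $A$-$B$-bimodule $P$ together with a compatible bimodule $Q$ and bimodule isomorphisms realizing $P\ox_B Q\cong A$ and $Q\ox_A P\cong B$ — which is additionally required to be compatible with the involutions up to the sign $\mu$ (this is exactly the data encoded in \cite[Ch.~I]{Knus} and used in \cite[2.1]{AU}). The associated isomorphism $W_\ve(A,\s)\to W_{\ve\mu}(B,\tau)$ sends a hermitian space $(M,h)$ over $(A,\s)$ to the space $(M\ox_A P, h')$ over $(B,\tau)$, where $h'$ is the transported form. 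I would first spell this out and then note that tensoring the entire context over $F$ with $L$ produces a Morita context between $A\ox_F L$ and $B\ox_F L$: $P\ox_F L$ is again a progenerator bimodule (being a progenerator is preserved by base change since $P$ is finitely generated projective as a one-sided module and base change is exact and monoidal), the structural isomorphisms base-change to isomorphisms, and the involution-compatibility is preserved because $\s\ox\id$, $\tau\ox\id$ act through the first factor. This gives the top horizontal arrow of \eqref{diag1}, and by construction it restricts the extension of the Morita equivalence.

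For commutativity, I would take a hermitian space $(M,h)$ over $(A,\s)$ and follow it around both ways. Going right then up gives $\bigl((M\ox_A P)\ox_B(B\ox_F L),\, \text{transport of } h\bigr)$, while going up then right gives $\bigl((M\ox_F L)\ox_{A\ox_F L}(P\ox_F L),\, \text{transport of }(h\ox L)\bigr)$. The underlying modules are canonically identified via the natural isomorphism $(M\ox_A P)\ox_F L\cong(M\ox_F L)\ox_{A\ox_F L}(P\ox_F L)$, and under this identification the two transported forms agree because the form-transport operation is defined purely in terms of the (evaluation and trace) maps of the Morita context, all of which are compatible with $\ox_F L$. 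Passing to Witt classes gives the commutativity of \eqref{diag1}. The statement about $\mu$ is inherited verbatim from the non-extended situation: $\mu=-1$ precisely when $\s,\tau$ are of the first kind and of opposite type, and base change to $L$ alters neither the kind nor the type of an involution (it is visible on the dimension of the symmetric subspace, which is multiplied by $[L:F]$ when finite, but in any case the relevant equality/inequality defining the type is preserved), so the sign computed for $(A\ox_F L,\s\ox\id)$ versus $(B\ox_F L,\tau\ox\id)$ is the same $\mu$.

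The only real point requiring care — and the step I expect to be the main obstacle in a fully rigorous write-up — is checking that the involution-compatibility of the Morita context is genuinely preserved under $\ox_F L$, i.e. that the $\ve$-hermitian datum transports to an $\ve\mu$-hermitian datum with the \emph{same} $\mu$ after extension. This is essentially bookkeeping with the sesquilinear pairing $P\x P\to A$ (or the appropriate bimodule-valued form) that comes with the context, and it works because that pairing is $F$-bilinear and $L$ is obtained by a flat base change that commutes with all the structure maps; but it is the place where one must actually invoke the concrete form of the definitions from \cite{Knus} rather than argue abstractly. Everything else is the standard fact that $-\ox_F L$ is a monoidal exact functor and hence preserves progenerators, Morita contexts, and the naturality isomorphisms used above.
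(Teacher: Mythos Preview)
Your proposal is correct and follows essentially the same route as the paper: extend the Morita context by applying $-\ox_F L$ to every datum, then check commutativity of \eqref{diag1} via the canonical isomorphism $(P\ox_A M)\ox_F L\cong (P\ox_F L)\ox_{A\ox L}(M\ox_F L)$ at the level of representing hermitian spaces. The paper simply makes explicit the step you flag as the main obstacle---it encodes the involution-compatibility as an $F$-linear bijection $\nu:M\to N$ satisfying $\nu(amb)=\tau(b)\nu(m)\s(a)$, defines the transported form concretely as $b_0b$ with $b_0(m,m')=g(\nu(m)\ox m')$, and then observes directly that ${b_0}'=b_0\ox L$---so there is no genuine difference in strategy.
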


\begin{proof}
Let $((A,\s), (B,\tau), M,N, f, g, \nu)$ be a tuple of Morita equivalence data. In other words (cf. \cite{D} and 
\cite[Chap.~2]{GB}), $M$ is an $A$-$B$ bimodule, $N$ a $B$-$A$ bimodule, 
$f:M\ox_B N\to A$ and $g: N\ox_A M \to B$ are bimodule homomorphisms (in fact, isomorphisms; cf. \cite{D} and 
\cite[Chap.~2]{GB}) that are associative in the sense that
\begin{align*}
f(m\ox n)\cdot m'&= m\cdot g(n\ox m'),\\
g(n\ox m)\cdot n' &= n\cdot f(m\ox n')
\end{align*}
for all $m,m' \in M$, $n,n'\in N$ and $\nu: M\to N$ is an $F$-linear bijective map such that $\nu(amb)=\tau(b)\nu(m)\s(a)$, for all
$a\in A, b\in B, m\in M$.  By \cite[1.2]{D}, $N$ is isomorphic to $\Hom_B(M,B)$, and we must also require that 
 the map $M\x M\to B, (x,y)\mapsto [\nu(x)](y)$ is a hermitian form over $(B,\tau)$, as pointed out in \cite[\S1.3, \S1.5]{CL}.

Consider the tuple $((A\ox_F L, \s\ox\id), (B\ox_F L, \tau\ox\id), M\ox_F L,
N\ox_F L, f_L, g_L, \nu_L)$ canonically induced by scalar extension from $F$ to $L$. 
It is not difficult to verify that it is again a tuple of Morita equivalence data.

We then have isomorphisms of $F$-algebras with involution (cf. \cite{D} and 
\cite[Chap.~2]{GB}) 
$(A,\s) \cong (\End_B(M), \ad_{b_0})$ and $(A\ox_F L,\s\ox\id) \cong (\End_{B\ox L}(M\ox_F L), \ad_{{b_0}'})$,
where
\[b_0: M\x M\too B, (m,m')\mapstoo g(\nu(m)\ox m')\]
and
\[{b_0}': (M\ox_F L)\x (M\ox_F L)\too B\ox_F L, (w,w')\mapstoo g_L(\nu_L(w)\ox w')\]
are $\mu$-hermitian forms over $(B,\tau)$ and $(B\ox_F L, \tau\ox\id)$, respectively,  and where  in fact ${b_0}'=b_0\ox L$, as can easily be verified. 
The horizontal isomorphisms in \eqref{diag1} are given by
\[W_\ve(A,\s) \simtoo W_{\ve\mu}(B,\tau), (P,b)\mapstoo (P\ox_A M, b_0b) \]
and
\[W_\ve(A\ox_F L,\s\ox \id) \simtoo W_{\ve\mu}(B\ox_F L,\tau\ox\id ), 
(P',b')\mapstoo (P'\ox_{A\ox L} (M\ox_F L), {b_0}'b'),\]
where 
\[b_0b(p\ox m, p'\ox m') := b_0(m, b(p,p') m')\quad \text{for all } m, m'\in M, p, p' \in P\]
and 
\[{b_0}'b'(p\ox w, p'\ox w') := {b_0}'(w, b'(p,p') w')\quad \text{for all } w, w'\in M\ox_F L, p, p' \in P'.\]
Now let $(P,b) \in W_\ve(A,\s)$. It follows from a straightforward computation that
\[\bigl( (P\ox_A M)\ox_F L, (b_0b)\ox L\bigr) = \bigl( (P\ox_F L) \ox_{A\ox L} (M\ox_F L), {b_0}'(b\ox L)\bigr)\]
and we conclude that the diagram \eqref{diag1}  commutes.
\end{proof}

\begin{thm} Let $(A,\s)$ and $(B,\tau)$ be Morita equivalent $F$-algebras with involution and assume that $\s$ and $\tau$ are of the same kind and type. Let $\zeta: W(A,\s)\simtoo W(B,\tau)$ be the induced isomorphism of Witt groups. Then
\[\sign_P^H h = \sign_P^{\zeta(H)} \zeta(h)\]
for all $h\in W(A,\s)$ and all $P\in X_F$.
\end{thm}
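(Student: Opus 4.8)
The plan is to reduce the statement about $H$-signatures to the already-established compatibility of $M$-signatures with Morita equivalence, using the commutative diagram from Lemma~\ref{Meq-lemma}. First I would fix $P\in X_F$. If $P\in\Nil[A,\s]$, then since $\Nil[A,\s]$ depends only on the Brauer class of $A$ and the type of $\s$, and since Morita-equivalent algebras with involution of the same kind and type have the same Brauer class and type, we also have $P\in\Nil[B,\tau]$; hence both sides of the claimed equality are $0$ by Definition~\ref{def:s-sign}, and there is nothing to prove. So assume $P\in\Xt_F=X_F\setminus\Nil[A,\s]=X_F\setminus\Nil[B,\tau]$.

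Next I would choose a Morita map $\CM$ for $(A,\s)$ as in \eqref{seq}, i.e.\ $W(A\ox_F F_P,\s\ox\id)\to W_\ve(D_P,\vt_P)$. The key point is that composing the Morita isomorphism $W(B\ox_F F_P,\tau\ox\id)\simtoo W(A\ox_F F_P,\s\ox\id)$ (the scalar extension to $F_P$ of the inverse of the Morita equivalence between $(A,\s)$ and $(B,\tau)$, which exists by Lemma~\ref{Meq-lemma} with $L=F_P$; note $\mu=1$ here since $\s,\tau$ have the same kind and type) with $\CM$ yields a valid Morita map $\CM'$ for $(B,\tau)$ in \eqref{seq}. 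Lemma~\ref{Meq-lemma} gives that the square relating scalar extension to $F_P$ and the two Morita isomorphisms commutes, so for every $h\in W(A,\s)$ we get $\CM'(\zeta(h)\ox F_P)=\CM(h\ox F_P)$, and therefore
\[
\sign_P^{\CM'}\zeta(h)=\sign_P\CM'(\zeta(h)\ox F_P)=\sign_P\CM(h\ox F_P)=\sign_P^\CM h
\]
for all $h\in W(A,\s)$.

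Finally I would unwind Definition~\ref{def:s-sign} on both sides using this identity of $M$-signatures. Let $H=(h_1,\dots,h_s)$; then $\zeta(H)=(\zeta(h_1),\dots,\zeta(h_s))$ is a tuple of reference forms over $(B,\tau)$ (indeed $\sign_P^{\CM'}\zeta(h_i)=\sign_P^\CM h_i$, so the non-vanishing condition of Definition~\ref{def:ref-form} is inherited, using that $\zeta$ is an isomorphism and that $\Xt_F$ is the same for both). For $P\in\Xt_F$, let $i$ be the least index with $\sign_P^\CM h_i\ne0$; since $\sign_P^{\CM'}\zeta(h_j)=\sign_P^\CM h_j$ for all $j$, the same $i$ is the least index with $\sign_P^{\CM'}\zeta(h_i)\ne0$, and the sign $\delta$ is the same in both computations. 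Hence $\sign_P^{\zeta(H)}\zeta(h)=\delta\sign_P^{\CM'}\zeta(h)=\delta\sign_P^\CM h=\sign_P^H h$, as desired.

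The main obstacle is verifying cleanly that $\CM'$ really qualifies as a map of the form \eqref{seq} for $(B,\tau)$, i.e.\ that the Morita theory over the real closure $F_P$ is compatible on the nose: one must check that $(B\ox_F F_P,\tau\ox\id)$ and $(A\ox_F F_P,\s\ox\id)$ are Morita equivalent via the $F_P$-extension of the given equivalence (this is exactly Lemma~\ref{Meq-lemma}), and that $(D_P,\vt_P)$ together with the sign convention $\sign_P$ are intrinsic to the common Brauer class and type, so that $\sign_P\circ\CM\circ(\text{Morita})$ is an admissible choice. Everything else is bookkeeping with Definition~\ref{def:s-sign}; the diagram in Lemma~\ref{Meq-lemma} is doing all the real work, so the proof should be short.
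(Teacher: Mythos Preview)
Your proof is correct and follows essentially the same approach as the paper's: both construct a Morita map for $(B,\tau)$ over $F_P$ by composing the extended equivalence from Lemma~\ref{Meq-lemma} with a Morita map $\CM$ for $(A,\tau)$, and then use the resulting equality of $M$-signatures to conclude. The only cosmetic difference is that the paper first invokes Proposition~\ref{h_zero} to reduce to a single reference form $H=(h_0)$ before unwinding the definition, whereas you track the full tuple $(h_1,\dots,h_s)$ directly; your bookkeeping with the least index $i$ and sign $\delta$ handles this just as well.
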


\begin{proof}
If $P\in \Nil[A,\s]=\Nil[B, \tau]$ there is nothing to prove. Thus assume that $P\in  \Xt_F$.
By  \eqref{h_zero} we may assume that $H=(h_0)$. Then \eqref{Meq-lemma} gives
a Morita equivalence of
 $(A_P,\s_P):=(A\ox_F F_P, \s\ox\id)$ and $(B_P, \tau_P):=(B\ox_F F_P, 
\tau\ox\id)$ such that the induced isomorphism
$\xi: W(A_P, \s_P) \simtoo W(B_P, \tau_P)$
makes the diagram
\begin{equation*}
\begin{split}
\xymatrix{
W(A_P, \s_P)\ar[r]^--{\xi}_{\sim} & W(B_P, \tau_P)\\
W(A,\s) \ar[r]^--{\zeta}_{\sim} \ar[u] & W(B,\tau) \ar[u] 
}
\end{split}
\end{equation*}
(where the vertical arrows are the canonical scalar extension maps) commute.
Now $(A_P,\s_P)$ is Morita equivalent to an $F$-algebra with involution $(D_P,\vt_P)$ as in \eqref{seq} (with $\ve=1$ since $P\not\in \Nil[A,\s]$) and we have an
induced isomorphism
\[\CM_1: W(A_P, \s_P) \simtoo W(D_P, \vt_P).\] 
Hence $(B_P,\tau_P)$ is Morita equivalent to 
$(D_P,\vt_P)$ via the equivalences that induce $\xi^{-1}$ and $\CM_1$ and we obtain an induced isomorphism
\[\CM_2:=\CM_1\circ \xi^{-1}: W(B_P, \tau_P) \simtoo W(D_P, \vt_P).\]
It follows that the diagram 
\begin{equation*}
\begin{split}
\xymatrix{
W(A, \s)\ar[d]_--{\zeta}  \ar[r] & W(A_P, \s_P)\ar[d]_--{\xi} 
\ar[r]^--{\CM_1} &  W(D_P, \vt_P)\ar[r]^--{\sign_P} & \Z\\
W(B, \tau) \ar[r] & W(B_P,\tau_P)\ar[ru]_--{\CM_2}  & &
}
\end{split}
\end{equation*}
commutes. Let $h\in W(A,\s)$. Then 
\begin{align*}
\sign_P^H h &= \sgn\bigl( \sign_P \CM_1(h_0\ox F_P)\bigr)\cdot \sign_P \CM_1 (h\ox F_P)\\
&= \sgn\bigl( \sign_P \CM_2 (\zeta(h_0)\ox F_P)\bigr)\cdot \sign_P \CM_2 (\zeta(h)\ox F_P)\\
&= \sign_P^{\zeta(H)} \zeta(h). \qedhere
\end{align*}
\end{proof}

\section{m-Ideals of Modules}

Let $(A,\s)$ be an $F$-algebra with involution.
We want to extend to $W(A,\s)$
 the well-known classification of all prime ideals of $W(F)$ in
terms of the kernels of the signature maps. We take some moments to look at the
properties of $\ker \sign_P^H$ for $P \in X_F$, inside $W(A,\s)$ considered as a
$W(F)$-module. It follows from \cite[3.6]{AU} that

\begin{enumerate}
  \item $(\ker \sign_P) \cdot W(A,\s) \subseteq \ker \sign_P^H$;
  \item $W(F) \cdot \ker \sign_P^H \subseteq \ker \sign_P^H$;
  \item for every $q\in W(F)$ and every $h\in W(A,\s)$, $q\cdot h \in \ker \sign_P^H$ implies
   that $q \in \ker \sign_P$ or $h \in \ker \sign_P^H$.
\end{enumerate}
These properties motivate the following

\begin{defi}
Let $R$ be a commutative ring and let $M$ be an $R$-module.
An \emph{m-ideal} of $M$ is a pair $(I,N)$ where $I$ is an ideal of $R$, $N$
      is a submodule of $M$, and such that $I \cdot M \subseteq N$.

An m-ideal $(I,N)$ of $M$ is \emph{prime} if $I$ is a prime ideal of $R$ (we
      assume that all prime ideals are proper), $N$ is a proper submodule of
      $M$, and         
      for every $r \in R$ and $m \in M$, $r\cdot m \in N$ implies that $r \in I$
      or $m \in N$.
\end{defi}

\begin{ex} 
The pair $(\ker \sign_P, \ker \sign_P^H)$ is a prime m-ideal of the $W(F)$-module $W(A,\s)$
whenever $P \in X_F \setminus \Nil[A,\s]$.
\end{ex}

The following lemma is immediate.

\begin{lemma} 
Let $(I,N)$ be a prime m-ideal of the $R$-module $M$. Then $I=\{r\in R \mid rM\subseteq N\}$.
\end{lemma}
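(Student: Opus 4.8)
The statement to prove is: if $(I,N)$ is a prime m-ideal of the $R$-module $M$, then $I=\{r\in R \mid rM\subseteq N\}$. The plan is to prove the two inclusions separately, the first being essentially the definition of an m-ideal and the second being where the primality hypothesis is used.

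First I would handle the inclusion $I\subseteq\{r\in R \mid rM\subseteq N\}$. This is immediate: by definition of an m-ideal we have $I\cdot M\subseteq N$, so every $r\in I$ satisfies $rM\subseteq N$, hence $r$ belongs to the set on the right.

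For the reverse inclusion $\{r\in R \mid rM\subseteq N\}\subseteq I$, suppose $r\in R$ satisfies $rM\subseteq N$. Since $N$ is a \emph{proper} submodule of $M$ (part of the definition of prime m-ideal), we may pick some $m_0\in M\setminus N$. Then $r\cdot m_0\in rM\subseteq N$, and by primality of the m-ideal, $r\cdot m_0\in N$ forces $r\in I$ or $m_0\in N$; since $m_0\notin N$, we conclude $r\in I$. This establishes the second inclusion, and together with the first the equality follows.

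The only point requiring any care is the use of properness of $N$: without it the set $M\setminus N$ could be empty and the argument would collapse (indeed the statement would be false, since then every $r$ would lie in the left-hand set but $I$ need not be all of $R$). I expect no genuine obstacle here; the lemma is, as the excerpt says, immediate, and the role of the proof is mainly to record that the primality axioms are exactly what is needed to recover $I$ canonically from the pair $(I,N)$ — mirroring the familiar fact that a prime ideal $\mathfrak p$ of a ring equals $\{r \mid r\cdot 1 \in \mathfrak p\}$, with the unit element replaced by an arbitrary element of $M$ outside $N$.
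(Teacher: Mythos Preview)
Your proof is correct and is exactly the intended ``immediate'' argument; the paper does not spell out a proof at all, merely noting the lemma is immediate. Your two inclusions, using $I\cdot M\subseteq N$ for one direction and picking $m_0\in M\setminus N$ together with the primality condition for the other, are precisely what is meant.
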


\begin{defi}
  Let $R$ and $S$ be commutative rings, let $M$ be an $R$-module and $N$  an
  $S$-module. We say that a pair $(\vf, \psi)$ is an \df{$(R,S)$-morphism 
  \textup{(}of modules\textup{)} from 
  $M$ to $N$}
  if 
  \begin{enumerate}
    \item $\vf : R \rightarrow S$ is a morphism of rings (and in particular $\vf(1)=1$);
    \item $\psi : M \rightarrow N$ is a morphism of additive groups;
    \item for every $r \in R$ and $m \in M$, $\psi(r\cdot m) = \vf(r)\cdot \psi(m)$.
  \end{enumerate}
  We call an $(R,S)$-morphism $(\vf,\psi)$  \df{trivial} if $\psi=0$. 
  We denote  the set of all $(R,S)$-morphisms from $M$ to
  $N$ by $\Hom_{(R,S)}(M,N)$ and its subset of non-trivial $(R,S)$-morph\-isms by $\Hom^*_{(R,S)}(M,N)$.
\end{defi}

\begin{ex} The pair
$(\sign_P, \sign_P^H)$ is a $(W(F),\Z)$-morphism from
    $W(A,\s)$ to $\Z$ and is trivial if and only if $P\in \Nil[A,\s]$.
\end{ex}

The following three lemmas are immediate.
\begin{lemma}\label{m-ideal}
Let $(\vf, \psi)$  be an $(R,S)$-morphism from $M$ to $N$. Then
$(\ker \vf, \ker \psi)$ is an m-ideal of the $R$-module $M$. Furthermore,
$(\ker \vf, \ker \psi)$ is prime if and only if the $\vf(R)$-module $\psi(M)$ is torsion-free.
\end{lemma}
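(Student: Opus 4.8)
The plan is to unwind the definitions and verify each half of the stated equivalence directly, since the claim says the m-ideal $(\ker\vf,\ker\psi)$ is prime if and only if $\psi(M)$ is torsion-free as a $\vf(R)$-module. First I would check that $(\ker\vf,\ker\psi)$ is always an m-ideal: $\ker\vf$ is an ideal of $R$, $\ker\psi$ is a submodule of $M$ (using property $(3)$ of an $(R,S)$-morphism together with $\vf$ being a ring morphism), and $(\ker\vf)\cdot M\subseteq\ker\psi$ because if $\vf(r)=0$ then $\psi(r\cdot m)=\vf(r)\cdot\psi(m)=0$. This is the routine part and I would dispatch it in a sentence or two.

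Next I would address the forward implication. Suppose $(\ker\vf,\ker\psi)$ is prime. To show $\psi(M)$ is torsion-free over $\vf(R)$, take $s\in\vf(R)$ and $y\in\psi(M)$ with $s\cdot y=0$, and I must conclude $s=0$ in $\vf(R)$ or $y=0$. Write $s=\vf(r)$ and $y=\psi(m)$; then $\vf(r)\psi(m)=\psi(r\cdot m)=0$, so $r\cdot m\in\ker\psi$. Primeness of the m-ideal gives $r\in\ker\vf$ or $m\in\ker\psi$, i.e. $s=\vf(r)=0$ or $y=\psi(m)=0$. That settles one direction; note it also uses implicitly that primeness requires $\ker\psi$ proper, equivalently $\psi(M)\neq 0$, which is consistent with the zero module being (vacuously) torsion-free — I would make sure the statement's conventions line up here, since a trivial morphism gives $\psi(M)=0$ and $\ker\psi=M$, which is \emph{not} a proper submodule, so in that degenerate case the m-ideal is not prime while $\psi(M)=0$ is torsion-free; this is the one subtlety worth a remark.

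For the reverse implication, suppose $\psi(M)$ is torsion-free over $\vf(R)$ (and, to get primeness in the sense defined, also nonzero, so that $\ker\psi\subsetneq M$; likewise $\ker\vf$ is prime because $R/\ker\vf\cong\vf(R)\subseteq S$ embeds in a ring, hence has no zero divisors — wait, that needs $\vf(R)$ to be a domain, which torsion-freeness of a nonzero module over it does force via the regular-module-like argument, or one simply invokes that $\vf(R)$ is a subring of the commutative ring $S$ only giving it is commutative, so I would instead argue $\ker\vf$ prime directly). Concretely: if $r_1 r_2\in\ker\vf$, then $\vf(r_1)\vf(r_2)=0$; pick any $m$ with $\psi(m)\neq0$, then $\vf(r_1)\cdot(\vf(r_2)\cdot\psi(m))=0$, and torsion-freeness forces $\vf(r_1)=0$ or $\vf(r_2)\cdot\psi(m)=0$; in the latter case torsion-freeness again forces $\vf(r_2)=0$ (as $\psi(m)\neq0$), so $r_1\in\ker\vf$ or $r_2\in\ker\vf$. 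Finally, for $r\in R$, $m\in M$ with $r\cdot m\in\ker\psi$, we get $\vf(r)\cdot\psi(m)=\psi(r\cdot m)=0$, so torsion-freeness yields $\vf(r)=0$ or $\psi(m)=0$, i.e. $r\in\ker\vf$ or $m\in\ker\psi$, which is exactly the prime condition.

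The main obstacle is not any real difficulty — every step is a one-line check — but rather pinning down the edge case where $\psi$ is trivial or $\psi(M)=0$: with the paper's convention that prime m-ideals require $N$ proper and prime ideals proper, I expect the intended reading is that $\psi(M)$ is implicitly assumed nonzero, or equivalently that the lemma is applied only to non-trivial morphisms; I would either state this hypothesis explicitly or phrase the conclusion as ``$(\ker\vf,\ker\psi)$ is prime if and only if $\psi(M)\neq0$ and is torsion-free over $\vf(R)$'' to be safe. Everything else is bookkeeping with the three defining properties of an $(R,S)$-morphism.
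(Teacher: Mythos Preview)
Your argument is correct and matches the paper's intent: the paper declares this lemma ``immediate'' and gives no proof, so your direct unwinding of the definitions is exactly what is expected. You are also right to flag the edge case where $\psi$ is trivial---then $\psi(M)=0$ is vacuously torsion-free while $\ker\psi=M$ is not proper, so the biconditional fails as literally stated; this is a genuine oversight in the paper's formulation, and indeed the companion Lemma~\ref{tf} (for the canonical projections) states the analogous equivalence with the explicit hypotheses ``$R/I\neq\{0\}$ and $M/N$ is a \emph{non-zero} torsion-free $R/I$-module'', which confirms your suggested fix.
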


\begin{lemma}\label{torsion-free}
Let $(I,N)$ be an m-ideal of the $R$-module $M$. Then 
  $M/N$ is canonically an $R/I$-module, where the product is defined
  by $(r+I) \cdot (m+N) = r\cdot m + N$. 
\end{lemma}

\begin{lemma}\label{tf}
Let $(I,N)$ be an m-ideal of the $R$-module $M$, and let $\vf : R \rightarrow R/I$ and 
$\psi  : M \rightarrow M/N$ be the canonical projections. Then
$(\vf, \psi)$ is an $(R,R/I)$-morphism of modules. Furthermore,
$(I,N)$ is prime if and only if $R/I\not=\{0\}$ and $M/N$ is a non-zero torsion-free $R/I$-module.
\end{lemma}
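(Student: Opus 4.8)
The statement to prove is Lemma~\ref{tf}: given an m-ideal $(I,N)$ of the $R$-module $M$, the canonical projections $\vf:R\to R/I$ and $\psi:M\to M/N$ form an $(R,R/I)$-morphism, and this pair has the property that $(I,N)$ is prime if and only if $R/I\neq\{0\}$ and $M/N$ is a non-zero torsion-free $R/I$-module.

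The plan is to verify the three axioms of an $(R,S)$-morphism for $(\vf,\psi)$ with $S=R/I$, then characterize primeness. For the first part I would note that $\vf$ is a ring morphism with $\vf(1)=1+I$, $\psi$ is a group morphism, and the compatibility $\psi(r\cdot m)=r\cdot m+N=(r+I)\cdot(m+N)=\vf(r)\cdot\psi(m)$ holds by the very definition of the $R/I$-module structure on $M/N$ supplied by Lemma~\ref{torsion-free}; this only requires $I\cdot M\subseteq N$, which is part of the hypothesis that $(I,N)$ is an m-ideal. So this first part is essentially a restatement of the two preceding lemmas and needs almost no work.

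For the equivalence, I would argue both directions directly from the definition of prime m-ideal. Suppose $(I,N)$ is prime. By definition $I$ is a proper ideal of $R$, so $R/I\neq\{0\}$; and $N$ is a proper submodule of $M$, so $M/N\neq 0$. To see $M/N$ is torsion-free over $R/I$, take $(r+I)\cdot(m+N)=0$ in $M/N$, i.e. $r\cdot m\in N$; primeness gives $r\in I$ or $m\in N$, i.e. $r+I=0$ or $m+N=0$, which is exactly torsion-freeness. Conversely, suppose $R/I\neq\{0\}$ and $M/N$ is a non-zero torsion-free $R/I$-module. Then $I\neq R$ so $I$ is a proper ideal; also $I$ is prime because $R/I$ being a nonzero torsion-free module over itself (as $M/N$ being nonzero forces $R/I$ to act faithfully in the appropriate sense) — more carefully, I would invoke that $M/N$ torsion-free and nonzero means: if $rs\in I$ pick $m\notin N$, then $(rs)\cdot(m+N)=0$, so $r\cdot(s\cdot(m+N))=0$; torsion-freeness gives $r\in I$ or $s\cdot(m+N)=0$, and in the latter case $s\in I$ (again by a torsion-free/nonzero argument, or more simply: $s\cdot m\in N$ and we want $s\in I$, which follows if we apply torsion-freeness to the nonzero element $m+N$). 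And $N\neq M$ since $M/N\neq 0$. Finally the prime m-ideal condition: if $r\cdot m\in N$ then $(r+I)\cdot(m+N)=0$ in $M/N$, and torsion-freeness yields $r+I=0$ or $m+N=0$, i.e. $r\in I$ or $m\in N$.

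The only subtle point — the ``main obstacle'', though it is minor — is making sure that in the converse direction the primeness of the ideal $I$ itself (not just its properness) is correctly extracted: one must use that $M/N$ is \emph{nonzero} to produce a witness $m\notin N$ against which to test torsion-freeness, and handle the case where $s\cdot(m+N)$ vanishes by re-applying torsion-freeness. This is routine but worth stating cleanly; everything else is immediate from Lemma~\ref{torsion-free}, Lemma~\ref{m-ideal}, and unwinding definitions.
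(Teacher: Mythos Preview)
Your proof is correct. The paper itself offers no proof: it declares this lemma, together with Lemmas~\ref{m-ideal} and~\ref{torsion-free}, to be ``immediate''. What you have written is precisely the routine verification the authors have in mind, and your care with the one genuinely non-automatic point --- extracting primeness of $I$ in the converse direction by choosing a witness $m\notin N$ and applying torsion-freeness twice --- is appropriate and correctly done.
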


\section{Classification of Prime m-Ideals of $W(A,\s)$}

Let $(I,N)$ be a prime m-ideal of the $W(F)$-module $W(A,\s)$. By classical results 
of Harrison \cite{H} and Lorenz and Leicht \cite{LL}
on $W(F)$, cf. \cite[VIII, 7.5]{Lam},
there are only three possibilities for $I$: 
\begin{enumerate}
  \item $I = \ker \sign_P$ for some $P \in X_F$;
  \item $I = \ker (\pi_p \circ \sign_P)$ for some $P \in X_F$ and prime $p>2$, 
     where $\pi_p : \Z \rightarrow \Z/p\Z$ is the canonical
    projection;
  \item $I = I(F)$, in which case $I = \ker (\pi_2 \circ \sign_P)$ for any $P \in
    X_F$.
\end{enumerate}
In this section we will investigate in how far $I$ determines $N$. In the first two cases (when $2 \not \in I$)  $N$ is uniquely determined by $I$,  but in the third case (when $2\in I$) this is not so.
We prove three auxiliary results first.

\begin{lemma}\label{tors}
Let $P\in X_F$ and let $h\in W(A,\s)$ be such that $\sign_P^H h=0$. Then there exist $n\in \N_0:=\N\cup\{0\}$ and $q\in W(F)$ such that $\sign_P q=2^n$ and $q\cdot h\in W(A,\s)_t$,  where $W(A,\s)_t$ denotes the torsion subgroup of $W(A,\s)$.
\end{lemma}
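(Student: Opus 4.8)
The plan is to use Pfister's local-global principle (Theorem (v)) together with a compactness argument on the space of orderings. Since $\sign_P^H h = 0$, the ordering $P$ lies in the clopen set $Z := \{Q \in X_F \mid \sign_Q^H h = 0\}$ (clopen because $\sign^H h$ is continuous, Theorem (iv)). If $Z = X_F$, then $h$ is torsion by Pfister's local-global principle and we may take $n = 0$, $q = \langle 1 \rangle$; so assume $Z \neq X_F$. The idea is to find a quadratic form $q$ over $F$ whose total signature is supported (up to sign and a power of $2$) on $Z$, so that $q \cdot h$ has vanishing $H$-signature everywhere and hence is torsion.

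First I would produce, using the clopenness of $Z$ and \cite[VIII, 6.10 or 6.9]{Lam} (as in the proofs of Lemma~\ref{induction} and Proposition~\ref{single-form}), a quadratic form $q_0$ over $F$ with $\sign_Q q_0 > 0$ for $Q \in Z$ and $\sign_Q q_0 < 0$ for $Q \notin Z$; more precisely I want a form realizing a prescribed continuous $\{-1,1\}$-valued total signature. Then for $Q \notin Z$ we have $\sign_Q(q_0 \cdot h) = \sign_Q q_0 \cdot \sign_Q^H h$, which need not vanish since the factor $\sign_Q q_0$ is $\pm$ something, not necessarily killing $\sign_Q^H h$. So a single multiplication does not suffice: the issue is that we need $\sign_Q(q \cdot h) = 0$ for \emph{all} $Q$, i.e. we need $\sign_Q q = 0$ precisely off $Z$, and a realizable total signature function that is $0$ off $Z$ and a power of $2$ on $Z$. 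The relevant tool is that a continuous function $X_F \to \Z$ lies in the image of the total signature map $W(F) \to C(X_F, \Z)$ if and only if it is congruent to a constant mod a suitable power of $2$ — this is Becker--Bröcker / the description of $\sign(W(F))$, but more elementarily one has $2^n C(X_F,\Z) \subseteq \sign(W(F))$ for $n$ large enough depending only on the number of ``fans'' / the stability index, which by compactness of $X_F$ is finite here. So: take the characteristic function $\chi_Z$ of $Z$; it is continuous; choose $n$ with $2^n \chi_Z \in \sign(W(F))$, and let $q \in W(F)$ realize it, so $\sign_Q q = 2^n$ on $Z$ and $\sign_Q q = 0$ off $Z$.

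With this $q$: for $Q \in Z$, $\sign_Q^H h = 0$ hence $\sign_Q^H(q \cdot h) = \sign_Q q \cdot \sign_Q^H h = 0$; for $Q \notin Z$, $\sign_Q^H(q \cdot h) = \sign_Q q \cdot \sign_Q^H h = 0 \cdot \sign_Q^H h = 0$, using Theorem (iii). Thus $\sign_Q^H(q \cdot h) = 0$ for every $Q \in X_F$, so by Pfister's local-global principle (Theorem (v)) $q \cdot h \in W(A,\s)_t$. Finally $\sign_P q = 2^n$ since $P \in Z$, and this is the required form. The main obstacle is the middle step: justifying that a large enough power of $2$ times the characteristic function of a clopen set is realized as a total signature of a quadratic form over $F$ — i.e. invoking the correct structural fact about $\sign(W(F)) \subseteq C(X_F,\Z)$ (the ``$2^n$'' coming from the finite stability index, valid by compactness), rather than just the density/approximation results in \cite[VIII, §6]{Lam} that only handle $\{-1,1\}$-valued functions. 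If one prefers to avoid the stability-index machinery, an alternative is to argue inductively/locally: cover $Z$ by basic clopen sets of the form $\{Q \mid \sign_Q \langle\!\langle a_1, \dots, a_k \rangle\!\rangle \neq 0\}$, on each of which the Pfister form itself has total signature $0$ or $2^k$, patch these together with the $\{-1,1\}$-realizability results as in Lemma~\ref{induction}, and use compactness to get a single $q$; this keeps everything within the tools already cited in the paper.
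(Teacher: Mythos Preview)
Your proposal is correct and follows essentially the same route as the paper: define the clopen vanishing set $Z=(\sign^H h)^{-1}(0)$, realize $2^n\chi_Z$ as the total signature of some $q\in W(F)$, and conclude via Pfister's local-global principle. The ``obstacle'' you flag is not one: \cite[VIII, 6.10]{Lam} (already invoked in the proof of Lemma~\ref{induction}) states precisely that for any continuous $f:X_F\to\Z$ there exist $n\in\N_0$ and $q\in W(F)$ with $\sign q=2^n f$, so no stability-index machinery or patching argument is needed.
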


\begin{proof}
Let
  $U = (\sign^H h)^{-1}(0)$. The set $U$ is clopen in $X_F$ since the total signature map
  $\sign^Hh$ is continuous \cite[7.2]{AU}.  Hence the function $\chi_U :
  X_F \rightarrow \Z$ defined by $\chi_U = 1$ on $U$ and $\chi_U = 0$ on $X_F \setminus U$
  is continuous and there are $n \in \N_0$ and $q \in W(F)$ such that $\sign q
  = 2^n \chi_U$, cf. \cite[VIII, 6.10]{Lam}. It follows that $\sign^H(q\cdot h) = \sign(q) \sign^H(h) = 0$ on $X_F$, and so $q\cdot h \in
  W(A,\s)_t$, by \cite[4.1]{LU}.
\end{proof}

\begin{lemma}\label{kernels} 
Let $(f,g) \in \Hom_{(W(F),\Z)} (W(A,\s),\Z)$. Then there exists a unique $P\in X_F$ such that
$f=\sign_P$ and $\ker \sign_P^H \subseteq \ker g$. In particular, if $P\in\Nil[A,\s]$, then $g=0$.
\end{lemma}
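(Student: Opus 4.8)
The statement has two parts: first, that the ring morphism $f : W(F) \to \Z$ must be of the form $\sign_P$ for a unique $P$; second, that for this $P$ one has $\ker \sign_P^H \subseteq \ker g$. The first part is essentially classical: a non-trivial ring morphism $W(F) \to \Z$ has prime kernel, and the Harrison/Lorenz-Leicht classification recalled above (via \cite[VIII, 7.5]{Lam}) combined with the fact that $\Z$ has no $p$-torsion and no element of additive order $2$ forces $\ker f = \ker \sign_P$ for some $P$, and then $f = \sign_P$ because both are the unique ring map with that kernel realising $\Z$ (one should note $f$ is determined by its kernel since $\Z$ is generated by $1$ as a ring and $f(1)=1$). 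Uniqueness of $P$ follows because distinct orderings have distinct signature kernels on $W(F)$. The plan is to dispatch this quickly and then concentrate on the inclusion of kernels.

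For the inclusion $\ker \sign_P^H \subseteq \ker g$: let $h \in W(A,\s)$ with $\sign_P^H h = 0$; I must show $g(h)=0$. The key tool is Lemma \ref{tors}: there exist $n \in \N_0$ and $q \in W(F)$ with $\sign_P q = 2^n$ and $q \cdot h$ torsion in $W(A,\s)$. Applying $g$ and using that $(f,g)$ is a $(W(F),\Z)$-morphism gives $g(q\cdot h) = f(q)\, g(h) = \sign_P(q)\, g(h) = 2^n\, g(h)$. Since $q\cdot h$ is torsion, $g(q\cdot h)$ is a torsion element of $\Z$, hence $0$. Therefore $2^n\, g(h) = 0$ in $\Z$, and since $\Z$ is torsion-free this forces $g(h) = 0$. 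This proves $\ker \sign_P^H \subseteq \ker g$.

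Finally, the ``in particular'' clause: if $P \in \Nil[A,\s]$, then $\sign_P^H \equiv 0$ by Definition \ref{def:s-sign}, so $\ker \sign_P^H = W(A,\s)$, and the inclusion just established gives $\ker g = W(A,\s)$, i.e. $g = 0$. I expect the only mild subtlety to be the bookkeeping in the first part — confirming that a ring homomorphism $W(F) \to \Z$ really is pinned down by its kernel and that cases (2) and (3) of the classification are excluded because $\Z$ contains no nonzero element killed by $p$ or by $2$; the substantive content of the lemma, the kernel inclusion, follows cleanly from Lemma \ref{tors} together with torsion-freeness of $\Z$, so that is not where the difficulty lies.
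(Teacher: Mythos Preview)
Your proposal is correct and the substantive part --- the kernel inclusion via Lemma~\ref{tors} and torsion-freeness of $\Z$ --- is exactly the paper's argument. The only difference is in establishing $f=\sign_P$: the paper does this directly by setting $P:=\{a\in F^\times\mid f(\la a\ra)=1\}\cup\{0\}$ and invoking \cite[VIII, 3.4(2)]{Lam}, whereas you reach the same conclusion via the prime ideal classification and the observation that the quotients $\Z/p\Z$ in cases (2) and (3) cannot embed in $\Z$; both routes are standard and equally short.
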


\begin{proof}
The morphism of rings $f : W(F) \to \Z$ is completely determined by $P:=\{a \in F^\x \mid f(\la a\ra)=1\}\cup\{0\}$, which is an ordering of $F$ (follow for instance the proof of \cite[VIII, 3.4(2)]{Lam}). Therefore $f=\sign_P$.

Let $h \in \ker \sign_P^H$. By \eqref{tors} there exists  $n\in \N_0$ and $q\in W(F)$ such that $\sign_P q=2^n$ and $q\cdot h\in W(A,\s)_t$. Since $f =\sign_P$, we obtain $f(q)\not=0$ and since $g$ has values in $\Z$ (which is torsion-free), we have $0=g(q\cdot h)=f(q) \cdot g(h)$, which implies
  $g(h) = 0$. Therefore $\ker \sign^H_P \subseteq \ker g$.
\end{proof}

\begin{lemma}\label{2inI}
Let   $(I, N)$ be an m-ideal of $W(A,\s)$ with $N\not=W(A,\s)$ and with $I=\ker(\pi_p\circ \sign_P)$ for some $P\in X_F$ and prime $p$.
Then $(I,N)$ is prime.
\end{lemma}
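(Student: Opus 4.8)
The plan is to verify directly that $(I,N)$ satisfies the definition of a prime m-ideal, using the hypotheses already in hand: $I$ is a prime ideal of $W(F)$ (since $\pi_p\circ\sign_P$ is a ring morphism onto a domain $\Z/p\Z$, whence $I$ is prime), $N$ is a proper submodule (given), and $I\cdot W(A,\s)\subseteq N$ (part of being an m-ideal). So the only thing left to check is the primeness condition: for $q\in W(F)$ and $h\in W(A,\s)$, if $q\cdot h\in N$ then $q\in I$ or $h\in N$. I would argue by contraposition: assume $q\notin I$ and derive $h\in N$.

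First I would split into the two cases $p=2$ and $p>2$, or rather handle them uniformly via Lemma \ref{tors}. Suppose $q\cdot h\in N$ and $q\notin I=\ker(\pi_p\circ\sign_P)$, so that $\sign_P q\not\equiv 0 \pmod p$. I want to produce a quadratic form $q'\in W(F)$ with $q'\cdot h\in N$ and with $\sign_P q'$ coprime to $p$ in a controlled way, and eventually multiply up to $h$ itself. The key input is Lemma \ref{tors}: since signatures are special cases (take $H=(h_0)$ and note $\sign_P^H h = \pm\sign_P^{\CM} h$), but more to the point I should first reduce to the case $\sign_P^H h = 0$. Here is the subtlety: $q\cdot h\in N$ does not a priori tell me $\sign_P^H h=0$. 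However, $I\cdot W(A,\s)\subseteq N$ and, for $p$ prime, there is some power relation — specifically I would use that $\langle 1,1,\dots\rangle$-type forms generate, together with the fact that $2^n$-fold multiples of anything with $\sign_P\equiv 0\pmod p$ land in $N$ after enough steps. The cleanest route: by Lemma \ref{tors} applied appropriately, or by directly invoking that $(\ker(\pi_p\circ\sign_P), \ker(\pi_p\circ\sign_P^H))$ would be the "expected" prime m-ideal — but $N$ need not equal that kernel, which is exactly why the lemma needs proof.

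Let me restate the strategy concretely. Assume $q\cdot h\in N$, $q\notin I$. Then $\sign_P q$ is prime to $p$, so there is $m\in\Z$ with $m\sign_P q\equiv 1\pmod p$; hence $m q - \langle 1\rangle \in I$ (as its $P$-signature mod $p$ vanishes), so $(mq-\langle1\rangle)\cdot h\in I\cdot W(A,\s)\subseteq N$. Combined with $mq\cdot h = m(q\cdot h)\in N$ (as $N$ is a submodule), we get $h = mq\cdot h - (mq-\langle1\rangle)\cdot h\in N$. This is the whole argument and it uses only: $I$ prime (to know $\sign_P q$ prime to $p$ means invertible mod $p$), $N$ a submodule, and $I\cdot W(A,\s)\subseteq N$.

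The main obstacle I anticipate is purely a bookkeeping one: making sure the element $mq-\langle 1\rangle$ genuinely lies in $I$. Since $I=\ker(\pi_p\circ\sign_P)$ and $\sign_P$ is a ring homomorphism $W(F)\to\Z$, we have $\sign_P(mq-\langle 1\rangle)=m\sign_P q - 1\equiv 0\pmod p$ by choice of $m$, so indeed $\pi_p(\sign_P(mq-\langle1\rangle))=0$ and $mq-\langle1\rangle\in I$. No compactness, no reference to the structure of $N$, and no case distinction between $p=2$ and $p>2$ is actually needed — the argument is uniform. I would therefore present it as a short three-line computation after recording that $I$ is prime because $\Z/p\Z$ is an integral domain and because $\pi_p\circ\sign_P$ is a surjective ring homomorphism with kernel $I$.
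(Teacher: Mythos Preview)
Your proof is correct and takes essentially the same approach as the paper: both argue that since $W(F)/I\cong\Z/p\Z$ is a field, $q\notin I$ is invertible modulo $I$, and multiplying $q\cdot h\in N$ by this inverse yields $h\in N$. Your explicit choice of the integer $m$ with $m\sign_P q\equiv 1\pmod p$ (so that $mq-\langle 1\rangle\in I$) is just a concrete realization of the abstract inverse $q'$ the paper picks in $W(F)/I$; the exploratory detour through Lemma~\ref{tors} in your write-up is unnecessary and can be dropped.
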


\begin{proof} 
Since $I$ is a prime ideal of $W(F)$ we only need to check that for $q\in W(F)$ and $h\in W(A,\s)$ such that $q\cdot h\in N$ we have
$q\in I$ or $h\in N$. Assume that $q\not\in I$. Since $W(F)/I$ is a field, $q$ is invertible modulo $I$, i.e. there exists $q'\in W(F)$ such that
$qq'=1+i$ with $i\in I$. Hence the assumption $q\cdot h\in N$ implies $qq'\cdot h=(1+i)\cdot h \in N$ and thus $h\in N$ since $i\cdot h\in I\cdot W(A,\s)\subseteq N$.
\end{proof}

\subsection*{The case $\bs{2 \not \in I}$}

\begin{lemma}\label{torsion}
  Let $(I,N)$ be a prime m-ideal of the $W(F)$-module $W(A,\s)$ such that $2 \not \in I$.   Then
  $W(A,\s)_t \subseteq N$.  
\end{lemma}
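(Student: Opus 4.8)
The plan is to show that every torsion element $h \in W(A,\s)_t$ lies in $N$, using the assumption $2 \notin I$ together with Lemma \ref{tors} and the fact that $W(A,\s)/N$ is torsion-free over $W(F)/I$ (Lemma \ref{tf}). First I would fix $h \in W(A,\s)_t$, say with $2^k h = 0$ for some $k$; since $h$ is torsion it is in particular a torsion form, so by Pfister's local-global principle (Theorem, part $(v)$) we have $\sign_P^H h = 0$ for every $P \in X_F$. In case $(1)$ or $(2)$ of the classification, $I = \ker(\pi_p \circ \sign_P)$ for some ordering $P$ and some prime $p$ (with $p = 1$ interpreted as giving $I = \ker \sign_P$, or one keeps the two cases separate). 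The key point that $2 \notin I$ translates to: the image of $2$ in the field $W(F)/I$ is nonzero, i.e. $p \neq 2$.

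The main step is then to exploit Lemma \ref{tors}: since $\sign_P^H h = 0$, there exist $n \in \N_0$ and $q \in W(F)$ with $\sign_P q = 2^n$ and $q \cdot h \in W(A,\s)_t$ — but actually I only need the simpler consequence that $q \cdot h$ is torsion, hence a torsion form, hence (by Pfister again, or directly) that $q \cdot h$ has trivial total $H$-signature. Wait — more directly: $q \cdot h \in W(A,\s)_t$ means $2^m (q \cdot h) = 0$ for some $m$, i.e. $2^m q \cdot h = 0 \in N$. Now work modulo $N$: writing $\bar q, \bar h$ for the images in the $W(F)/I$-module $W(A,\s)/N$, we have $\overline{2^m q} \cdot \bar h = 0$. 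Since $\sign_P q = 2^n$ and $2 \notin I$, the element $\bar q$ equals the image of $2^n$ in $W(F)/I$ (because $q$ and the rank-$2^n$ form $2^n \cdot \langle 1 \rangle$ have the same signature at $P$, so their difference lies in $\ker \sign_P \subseteq I$ — here I use $I \supseteq \ker \sign_P$, which holds in all of cases $(1)$–$(3)$). Hence $\overline{2^{m+n}} \cdot \bar h = 0$ in the torsion-free $W(F)/I$-module $W(A,\s)/N$, and since $W(F)/I$ is a domain in which $2$ is invertible (as $2 \notin I$ and $W(F)/I$ is a field in cases $(1)$–$(3)$), $\overline{2^{m+n}}$ is a nonzero element of a domain, so $\bar h = 0$, i.e. $h \in N$.

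I expect the main obstacle to be bookkeeping the relationship between $q$ and $2^n$ cleanly: one must be careful that $I$ contains $\ker \sign_P$ in every one of the three cases of the classification (which it does, since $\ker(\pi_p \circ \sign_P) \supseteq \ker \sign_P$ and $I(F) \supseteq \ker \sign_P$ for any $P$), so that $q \equiv 2^n \langle 1 \rangle \pmod{I}$ once $\sign_P q = \sign_P(2^n\langle 1\rangle) = 2^n$. A secondary subtlety is making sure the torsion-freeness of $W(A,\s)/N$ over $W(F)/I$ is invoked correctly — this is exactly Lemma \ref{tf} applied to the prime m-ideal $(I,N)$, combined with $W(F)/I \neq \{0\}$. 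Alternatively, and perhaps more slickly, one can avoid Lemma \ref{tors} entirely: since $h$ is a torsion form, there is by Pfister's local-global principle and \cite[VIII, 6.10]{Lam} a quadratic form $q$ over $F$ with $\sign q = 2^n \chi_{X_F} = 2^n \langle 1 \rangle$'s signature, i.e. $\sign_P q = 2^n$ for all $P$, with $q \cdot h = 0$; but Lemma \ref{tors} is already proved in the excerpt and packages precisely this, so I would use it directly.
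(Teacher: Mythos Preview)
Your argument is correct (modulo the minor slip that $W(F)/I \cong \Z$ is not a field in case~$(1)$, though it is a domain, which is all you actually use), but it is a substantial detour compared to the paper's proof. The paper simply observes that by \cite[5.1]{Sch1} the torsion in $W(A,\s)$ is $2$-primary, so $2^n h = 0 \in N$ for some $n$; the defining property of a prime m-ideal then forces $2^n \in I$ or $h \in N$, and since $I$ is prime with $2 \notin I$ we have $2^n \notin I$, whence $h \in N$. That is the entire proof.

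You actually write down $2^k h = 0$ in your first sentence, but instead of applying the prime m-ideal property directly to this relation you pass through Pfister's local-global principle, the Harrison--Lorenz--Leicht classification, and Lemma~\ref{tors} to manufacture another relation $\overline{2^{m+n}} \cdot \bar h = 0$ in $W(A,\s)/N$ --- which is no stronger than what you started with. None of those ingredients are needed here; they appear in the paper only in the proof of Proposition~\ref{classif}, where one does \emph{not} begin with a torsion hypothesis and must therefore use Lemma~\ref{tors} to land in $W(A,\s)_t$ before invoking the present lemma. Note also that your phrase ``say with $2^k h = 0$'' already tacitly invokes the $2$-primary torsion result; this is the only nontrivial input and deserves a citation (the paper cites \cite[5.1]{Sch1}).
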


\begin{proof}
  Let $h \in W(A,\s)_t$. Then there exists $n \in \N$ such that $0= 2^n\cdot h$ by \cite[5.1]{Sch1}. 
  Since $0 \in N$, this implies $2^n \in I$ or $h \in N$. 
  Since $2 \not
  \in I$, we obtain $h \in N$.
\end{proof}

\begin{prop}\label{classif}
  Let $(I,N)$ be a prime m-ideal of the $W(F)$-module $W(A,\s)$ with $2 \not \in I$. Then one of
  the following holds:
  \begin{enumerate}[$(i)$]
    \item There exists   $P \in X_F$ such that $(I,N) = (\ker \sign_P,  \ker \sign_P^H)$.
    \item There exist   $P \in X_F$ and a prime $p >2$   such that
    \begin{align*}
     (I,N) &= \bigl(\ker (\pi_p \circ \sign_P), \ker (\pi \circ \sign_P^H) \bigr) \\  
     &=  \bigl(p\cdot W(F) + \ker \sign_P, p\cdot W(A,\s)+\ker \sign_P^H\bigr), 
     \end{align*}
     where
      $\pi: \im \sign_P^H \to \im \sign_P^H/(p\cdot \im\sign_P^H)$ is the canonical projection.
  \end{enumerate}
\end{prop}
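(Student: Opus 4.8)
The plan is to use the classification of the three possible shapes of $I$ (listed just before the proposition) to reduce to the two stated cases, and then in each case show that the prime m-ideal structure forces $N$ to be exactly the kernel described. Since $2\not\in I$, the third possibility $I=I(F)$ is excluded, so $I$ is of type $(1)$ or type $(2)$ for some $P\in X_F$, with $p=\infty$ in the formal sense of type $(1)$ or $p>2$ in type $(2)$.

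\emph{Case $(i)$: $I=\ker\sign_P$.} Here I would first observe $P\notin\Nil[A,\s]$, since otherwise $\sign_P\equiv 0$ would give $I=W(F)$, which is not a prime (proper) ideal; so $P\in\Xt_F$ and $(\ker\sign_P,\ker\sign_P^H)$ is itself a prime m-ideal by the Example following the definition of prime m-ideal. The inclusion $\ker\sign_P^H\subseteq N$ is the key point: given $h\in\ker\sign_P^H$, Lemma~\ref{tors} produces $q\in W(F)$ and $n\in\N_0$ with $\sign_P q=2^n$ and $q\cdot h\in W(A,\s)_t$; then Lemma~\ref{torsion} gives $q\cdot h\in N$, and since $q\notin I=\ker\sign_P$ (as $\sign_P q=2^n\neq 0$) primeness of $(I,N)$ forces $h\in N$. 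For the reverse inclusion $N\subseteq\ker\sign_P^H$: let $h\in N$. Since $N$ is a \emph{proper} submodule, pick $h_1\in W(A,\s)\setminus N$; if $\sign_P^H h\neq 0$ then, comparing $\Z$-values, one can find $q\in W(F)$ with $\sign_P q$ a suitable power of $2$ making $\sign_P^H(q\cdot h_1 - \text{(integer multiple of }h\text{)})$ land inside $\ker\sign_P^H\subseteq N$ while $q\cdot h_1\notin N$, contradicting $h\in N$. More cleanly: since $\ker\sign_P^H\subseteq N$ already, $N/\ker\sign_P^H$ is a submodule of $W(A,\s)/\ker\sign_P^H\cong\im\sign_P^H\subseteq\Z$ which is a torsion-free $\Z$-module; by Lemma~\ref{tf} it must be a proper torsion-free $W(F)/I$-module, but $W(F)/I\cong\Z$ acts on $\im\sign_P^H$ in the standard way and the only proper torsion-free submodule-image situation forces $N/\ker\sign_P^H=0$, i.e. $N=\ker\sign_P^H$.

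\emph{Case $(ii)$: $I=\ker(\pi_p\circ\sign_P)$ with $p>2$ prime.} Again $P\notin\Nil[A,\s]$, else $\sign_P\equiv 0$ and $I=W(F)$. I would set $g:=\pi\circ\sign_P^H$ where $\pi:\im\sign_P^H\to\im\sign_P^H/(p\cdot\im\sign_P^H)$; then $(\pi_p\circ\sign_P, g)$ is a $(W(F),\Z/p\Z)$-morphism, so $(\ker(\pi_p\circ\sign_P),\ker g)$ is an m-ideal, and Lemma~\ref{2inI} (with this $p$) shows it is prime. The inclusion $\ker g\subseteq N$: take $h$ with $\sign_P^H h\in p\cdot\im\sign_P^H$; I would produce $h'\in W(A,\s)$ and $r\in W(F)$ with $r\notin I$ (i.e. $p\nmid\sign_P r$) such that $r\cdot h' \equiv h \pmod{\ker\sign_P^H}$ with $\sign_P^H h' = \frac{1}{p}\sign_P^H h \cdot(\text{unit})$... — more directly, write $\sign_P^H h = p m$, use Lemma~\ref{tors}-style arguments together with the fact that $\ker\sign_P^H\subseteq\ker g\subseteq$ both sides, to reduce to showing $p\cdot W(A,\s)+\ker\sign_P^H\subseteq N$; the containment $\ker\sign_P^H\subseteq N$ holds because $\ker\sign_P^H\subseteq\ker g$ and the m-ideal $(I,\ker g)$ is contained in... hmm — cleanest is: $\ker\sign_P^H\subseteq N$ by the Case $(i)$ argument verbatim (it used only $2\notin I$ and primeness), and $p\cdot W(A,\s)\subseteq I\cdot W(A,\s)\subseteq N$ since $p=p\cdot\la 1\ra\in\ker(\pi_p\circ\sign_P)=I$. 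Conversely $N\subseteq p\cdot W(A,\s)+\ker\sign_P^H = \ker g$: pass to $W(A,\s)/\ker g$, which is a torsion-free $W(F)/I$-module, i.e. a torsion-free $\Z/p\Z$-module, hence an $\mathbb{F}_p$-vector space; $N/\ker g$ is a proper subspace, but the quotient $(W(A,\s)/\ker g)$ is $1$-dimensional over $\mathbb{F}_p$ (being the image of $\sign_P^H$ reduced mod $p$, which is cyclic), so the only proper subspace is $0$, giving $N=\ker g$.

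\textbf{Main obstacle.} The delicate step is the reverse inclusion $N\subseteq\ker\sign_P^H$ (resp. $N\subseteq\ker g$): showing that a proper submodule $N$ containing the kernel cannot be strictly larger. The right tool is Lemma~\ref{tf}, which identifies $M/N$ as a nonzero torsion-free $W(F)/I$-module; combined with the fact that $W(F)/I$ is $\Z$ (case $(i)$) or $\mathbb{F}_p$ (case $(ii)$) and that $\im\sign_P^H$ modulo the kernel is cyclic/one-dimensional, the torsion-freeness and properness pin down $N$ exactly. One must be slightly careful that $\im\sign_P^H$ is all of $\Z$ or a nonzero subgroup thereof so the module-theoretic dimension count is valid; this is where $P\in\Xt_F$ (ensuring $\sign_P^H\not\equiv 0$) is used.
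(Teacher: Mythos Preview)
Your approach is essentially the paper's: use the Lorenz--Leicht classification of $I$, establish $\ker\sign_P^H\subseteq N$ via Lemma~\ref{tors} and Lemma~\ref{torsion}, then pass to the quotient by $\ker\sign_P^H$ (an infinite cyclic group) and use that $W(A,\s)/N$ is nonzero and torsion-free over $W(F)/I$ (Lemma~\ref{tf}) to pin down $N$. Your case~$(ii)$ endgame---observing that $W(A,\s)/\ker g$ is one-dimensional over $\mathbb{F}_p$ so a proper subspace $N/\ker g$ must vanish---is a slight streamlining of the paper's index computation, but equivalent.

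One genuine slip: your argument that $P\notin\Nil[A,\s]$ is wrong. You write ``otherwise $\sign_P\equiv 0$ would give $I=W(F)$'', but $\sign_P:W(F)\to\Z$ is the ordinary signature at $P$ and is \emph{never} identically zero (it sends $\la 1\ra$ to $1$); membership of $P$ in $\Nil[A,\s]$ concerns $\sign_P^H$ on $W(A,\s)$, not $\sign_P$ on $W(F)$. The correct justification---which you in fact have all the ingredients for---is that once you have shown $\ker\sign_P^H\subseteq N$, the hypothesis $P\in\Nil[A,\s]$ would give $\ker\sign_P^H=W(A,\s)\subseteq N$, contradicting properness of $N$. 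This is exactly how the paper (implicitly) argues. Also, your ``more cleanly'' paragraph in case~$(i)$ has a dangling ``it'': Lemma~\ref{tf} gives torsion-freeness of $W(A,\s)/N$, not of $N/\ker\sign_P^H$; once that is said clearly, the conclusion $N/\ker\sign_P^H=0$ follows since a nontrivial quotient of an infinite cyclic group by a nonzero subgroup is torsion.
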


\begin{proof} 
We first observe that the ordering  $P$  we are looking for cannot be in $\Nil[A,\s]$, since $N$ is
a proper submodule of $W(A,\s)$.

  We denote by $\vf : W(F) \rightarrow W(F)/I$ and $\psi : W(A,\s) \rightarrow
  W(A,\s)/N$ the canonical projections. By  \eqref{tf}, $\im \psi$ is a
  torsion-free $\im \vf$-module, where the product is given by $\vf(q)\cdot \psi(h) =
  \psi(q\cdot h)$ for $q\in W(F)$ and $h\in W(A,\s)$.
  
  We consider the different possibilities for $I = \ker \vf$. By the classification of
  prime ideals of $W(F)$ there is a $P \in X_F$ such that $\ker \sign_P \subseteq
  I$. Let $h \in \ker \sign_P^H$. By \eqref{tors}  there exists $n\in \N_0$ and $q\in W(F)$ such that $\sign_P q=2^n$ and $q\cdot h\in W(A,\s)_t$. Since $W(A,\s)_t \subseteq N$ by \eqref{torsion} we obtain $0=\psi(q\cdot h)= \vf(q)\cdot \psi(h)$. 
 But we know that $I
  = \ker \sign_P$ or $I = \ker (\pi_p \circ \sign_P)$ for some prime $p>2$. Since 
 $\sign_P(q)=2^n$ and $p>2$,
  we obtain that $q \not \in I = \ker \vf$. Thus $\vf(q)\cdot \psi(h) = 0$ implies
  $\psi(h) = 0$ by  \eqref{tf}. Therefore $\ker \sign^H_P \subseteq N$.
  
 We now consider the two possibilities for $\vf$:
  
$(i)$ $\ker \vf = \ker \sign_P$. Then $\im \vf \cong \Z$ is a torsion-free abelian
      group, and it follows from \eqref{tf} that $\im \psi$ is a
      torsion-free abelian group. Using $\ker \sign^H_P \subseteq \ker \psi=N$, we
      obtain
      \[\im \psi = W(A,\s)/N \cong \bigl(W(A,\s)/\ker \sign^H_P\bigr)/\bigl(N/\ker
      \sign^H_P\bigr),\]
      where $W(A,\s)/\ker \sign^H_P\cong \im \sign_P^H$ is an infinite cyclic group.       
      Since $\im \psi$ is torsion-free, the only
      possibility  is $N/\ker \sign^H_P = \{0\}$,
      i.e. $N = \ker \sign_P^H$.

$(ii)$ $\ker \vf = \ker (\pi_p \circ \sign_P)$. Then $\im \vf$ is a field with $p$
elements and $\im \psi = W(A,\s)/N$ is an $\im \vf$-vector space. In particular, for every $h\in W(A,\s)$, we have that $p\cdot h\in N$.
Since we also have $\ker \sign_P^H\subseteq N$ we obtain
$N_0:=p\cdot W(A,\s)+\ker\sign_P^H \subseteq N $.     

Consider an arbitrary element $p\cdot h+h_0\in N_0$ with $h\in W(A,\s)$ and $h_0\in \ker \sign_P^H$. Then $\sign_P^H(p\cdot h+h_0)=p\sign_P^H h$ and $\sign_P^H$ induces an
isomorphism $N_0/\ker\sign_P^H \cong p\cdot C$, where $C=\im \sign_P^H$ is an infinite cyclic group. 
It follows that 
\[W(A,\s)/N_0\cong \bigl(W(A,\s)/\ker \sign^H_P\bigr)/\bigl(N_0/\ker \sign^H_P\bigr)\cong C/(p\cdot C),\] 
and so $[W(A,\s):N_0]=p$.     

Since $N/\ker \sign_P^H$ is a subgroup of $W(A,\s)/\ker \sign_P^H\cong C$, there must exist 
$k\in \N$ such that $N/\ker \sign_P^H\cong k\cdot C$.
It follows that 
\[W(A,\s)/N\cong \bigl(W(A,\s)/\ker \sign^H_P\bigr)/\bigl(N/\ker \sign^H_P\bigr)\cong C/(k\cdot C),\] 
which we know to be a vector space over a field with $p$ elements. Thus every non-zero
element of $W(A,\s)/N$ has order $p$ and so
$px\in k\cdot C$ for all $x\in C$. It follows that $k\vert p$. If $k=1$, then $N=W(A,\s)$, a contradiction. Thus $k=p$ and so $[W(A,\s):N]=p$.

We conclude that $N=N_0=p\cdot W(A,\s)+\ker\sign_P^H$. 

Finally, consider the canonical projection $\pi: \im \sign_P^H \to \im \sign_P^H/
(p\cdot\im\sign_P^H)$. It is not difficult to see that
$p\cdot W(A,\s)+\ker\sign_P^H=\ker(\pi\circ \sign_P^H)$.
\end{proof}

\subsection*{The case $\bs{2 \in I}$.}

Consider a prime m-ideal  $(I,N)$ of the $W(F)$-module $W(A,\s)$ with $2 \in I$. As observed
above we have $I = I(F)$, the fundamental ideal of $W(F)$. 

We define $I(A,\s)$ to be the submodule of all equivalence classes in
$W(A,\s)$ of forms of even rank, cf. \cite[2.2]{BP1} or \cite[\S 3.2.1]{GB}.
Observe that if $A$ is a division algebra, then $I(A,\s)$ is additively
generated by the forms $\la 1, a \ra_\s$ for $a \in \Sym(A,\s)^\times$ since every form can be
diagonalized. Recall from \cite[\S 3.2.1]{GB}:

\begin{lemma}\label{index2}
  $I(A,\s)$ has index $2$ in $W(A,\s)$.
\end{lemma}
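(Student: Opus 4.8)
The plan is to exhibit a surjective group homomorphism $W(A,\s)\to\Z/2\Z$ whose kernel is exactly $I(A,\s)$; then the index statement follows immediately from the first isomorphism theorem, since such a homomorphism is necessarily surjective (its image is a nonzero subgroup of $\Z/2\Z$) as long as $W(A,\s)$ contains at least one form of odd rank, which it does (e.g. any rank-one form).

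First I would recall from Wedderburn theory, as set up in the Notation section, that every hermitian module $M$ over $(A,\s)$ decomposes as $M\cong(D^n)^k$ for a well-defined $k=\rk h\in\N$, and that $\rk$ is additive on orthogonal sums and invariant under Morita equivalence (references \cite[2.1]{BP1}, \cite[\S3.2.1]{GB}). The key point to verify is that $\rk$ descends to a well-defined map on Witt classes modulo $2$: a hyperbolic form has even rank (it is an orthogonal sum of hyperbolic planes, each of even rank, or directly $h\perp(-h)$ has rank $2\rk h$), so $h\mapsto \rk h \bmod 2$ factors through Witt equivalence. This gives a group homomorphism $e_0\colon W(A,\s)\to\Z/2\Z$, and by definition $I(A,\s)=\ker e_0$ is the submodule of classes of even rank.

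Then I would check surjectivity: the unit form $\la 1\ra_\s$ (or any diagonal form $\la a\ra_\s$ with $a\in\Sym(A,\s)^\x$, which exists) has rank $n$, and adding a suitable multiple shows both cosets of $\Z/2\Z$ are hit — more simply, if every form had even rank then $W(A,\s)$ would be generated by even-rank forms, but $\la 1,1\ra_\s \perp$ nothing versus a rank-one generator: since $W(A,\s)$ is additively generated by rank-one forms (as used in the proof of the Corollary after Theorem~\ref{thm1}), and these have odd rank precisely when $n$ is odd, while when $n$ is even one must instead argue... Here I pause: if $n$ is even, \emph{every} free-module form has even rank, so I must be careful. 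The correct statement is that $\rk$ takes value $k$ and a simple module $D^n$ itself is a hermitian space of rank $1$; the relevant generator is the rank-one form on the simple module $D^n$, whose rank is $1$, odd. So surjectivity holds because rank-one forms exist and $1$ is odd.

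The main obstacle is precisely this last subtlety: one must distinguish \emph{dimension} (as a free-module diagonal form $\la a_1,\dots,a_\ell\ra_\s$, which has rank $\ell n$) from \emph{rank} (the Wedderburn multiplicity $k$), and ensure the homomorphism $e_0$ is built from rank, not dimension, so that rank-one forms — which exist and are odd — witness surjectivity regardless of the parity of $n$. Once that is pinned down, and once one notes $W(A,\s)$ is nonzero and generated by rank-one classes, the index-$2$ conclusion is immediate. I would present this by: (1) recalling $\rk$ and its additivity/invariance, (2) observing hyperbolic forms have even rank so $e_0$ is well-defined on $W(A,\s)$, (3) identifying $\ker e_0 = I(A,\s)$, (4) noting a rank-one form maps to $1\ne 0$, hence $e_0$ is onto, giving $W(A,\s)/I(A,\s)\cong\Z/2\Z$.
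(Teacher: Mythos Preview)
Your approach is correct and is essentially the same as the paper's: both arguments amount to showing that rank modulo $2$ gives a surjective homomorphism $W(A,\s)\to\Z/2\Z$ with kernel $I(A,\s)$. The paper achieves this in two lines by first invoking Morita invariance of rank to reduce to the case where $A$ is a division algebra, and then simply observing that $\la a\ra_\s \equiv \la b\ra_\s \bmod I(A,\s)$ for all $a,b\in\Sym(A,\s)^\times$ (since their difference has rank $2$); diagonalizability over a division algebra then forces the quotient to have exactly two elements. Your version works directly over $(A,\s)$ and is fine once you settle the rank/dimension confusion as you eventually do---the Morita reduction just lets one sidestep that digression entirely.
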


\begin{proof} Since the rank is invariant under Morita equivalence, we may assume that $A$ is a division algebra, in which case
the result follows immediately from the fact that $\la a \ra_\s = \la b \ra_\s\mod I(A,\s)$ for every $a,b \in \Sym(A,\s)^\times$.
\end{proof}

\begin{prop}
A pair $(I(F), N)$ is a prime m-ideal of $W(A,\s)$ if and only if $N$ is a proper submodule of $W(A,\s)$ with 
$I(F)\cdot W(A,\s)\subseteq N$.
\end{prop}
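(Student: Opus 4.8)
The plan is to prove the two implications separately. One direction is essentially definitional: if $(I(F),N)$ is a prime m-ideal of $W(A,\s)$, then by the definition of an m-ideal we have $I(F)\cdot W(A,\s)\subseteq N$, and by the definition of \emph{prime} m-ideal $N$ is a proper submodule of $W(A,\s)$. So there is nothing to do there.

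For the substantive direction, suppose $N$ is a proper submodule of $W(A,\s)$ with $I(F)\cdot W(A,\s)\subseteq N$. Since $I(F)$ is a prime ideal of $W(F)$ (it is the kernel of $\pi_2\circ\sign_P$ for any $P\in X_F$, and $W(F)/I(F)\cong\Z/2\Z$ is a field), the pair $(I(F),N)$ is an m-ideal, and by \eqref{2inI} applied with $p=2$ it is automatically prime. Concretely: given $q\in W(F)$ and $h\in W(A,\s)$ with $q\cdot h\in N$, if $q\notin I(F)$ then $q$ is invertible modulo $I(F)$ because $W(F)/I(F)$ is a field, so $qq'=1+i$ for some $q'\in W(F)$ and $i\in I(F)$; then $(1+i)\cdot h=qq'\cdot h\in N$ and $i\cdot h\in I(F)\cdot W(A,\s)\subseteq N$, forcing $h\in N$. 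Together with $I(F)$ prime and $N$ proper, this is exactly the definition of a prime m-ideal. Hence the statement follows immediately from \eqref{2inI}.

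There is essentially no obstacle here: the only point requiring care is to record that $I(F)=\ker(\pi_2\circ\sign_P)$ so that \eqref{2inI} applies, and to note that the m-ideal condition $I(F)\cdot W(A,\s)\subseteq N$ and properness of $N$ are precisely the hypotheses of \eqref{2inI}. So the proof is just the two sentences: the forward implication is unwinding definitions, and the reverse implication is a one-line appeal to \eqref{2inI} with $p=2$, after observing $I(F)=\ker(\pi_2\circ\sign_P)$ for any $P\in X_F$.

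Accordingly I would write:

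\begin{proof}
If $(I(F),N)$ is a prime m-ideal, then by definition $N$ is a proper submodule of $W(A,\s)$ and $I(F)\cdot W(A,\s)\subseteq N$.

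Conversely, assume $N$ is a proper submodule of $W(A,\s)$ with $I(F)\cdot W(A,\s)\subseteq N$. Since $I(F)=\ker(\pi_2\circ\sign_P)$ for any $P\in X_F$, the pair $(I(F),N)$ is an m-ideal of $W(A,\s)$ of the form considered in \eqref{2inI} (with $p=2$), and since $N\not=W(A,\s)$, it follows from \eqref{2inI} that $(I(F),N)$ is prime.
\end{proof}
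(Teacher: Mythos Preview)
Your proof is correct and takes essentially the same approach as the paper: the forward implication is immediate from the definitions, and the reverse implication is exactly the paper's one-line appeal to \eqref{2inI} with $p=2$, using $I(F)=\ker(\pi_2\circ\sign_P)$. Your write-up simply spells out what the paper leaves implicit.
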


\begin{proof}
Take $p=2$ in \eqref{2inI}.
\end{proof}

We thus see that in contrast to the $2\not\in I$ case, $N$ is not uniquely determined by $I$, 
since there are in general several proper submodules $N$ of $W(A,\s)$ containing
$I(F) \cdot W(A,\s)$. Obviously one such submodule is $I(F) \cdot W(A,\s)$
itself, and $I(A,\s)$ is another one. The following example shows that in general these 
modules are distinct.

\begin{ex}
  Let $F = \R(\!(x)\!)(\!(y)\!)$ be the iterated Laurent series field in the unknowns $x$ and $y$ over the field of real numbers.
  Let $D=(x,y)_F$ be the quaternion algebra with $F$-basis $(1,i,j,k)$ such that
  $i^2=x$, $j^2=y$ and $ij=k=-ji$. Let $\s$ be the orthogonal involution on $D$ that sends $i$ to $-i$ and that fixes the other basis elements. 
  Let $v : F  \rightarrow \Z \times \Z$ be the
  standard $(x,y)$-adic valuation on $F$ (see for instance \cite[Chapter
  3]{Wad}). Note that $F$ is Henselian with respect to $v$ and has residue field  $\R$. An application of
  Springer's theorem shows that the norm form  $\la 1,-x,-y,xy \ra$ of $D$ is
  anisotropic (we obtain four residue forms of dimension $1$ over $\R$, that are
  necessarily anisotropic). Hence $D$ is a division algebra, cf. \cite[III,
  4.8]{Lam}.

  Since $F$ is Henselian, the valuation $v$ extends uniquely to a valuation on
  $D$ (see \cite[Thm. 2]{Mo}), which we also denote by $v$.  We use standard notation with $\bar{F}$ and
  $\bar{D}$ standing for the residue field of $F$ and the residue division algebra of $D$, respectively, and with
  $\Gamma_F:=v(F^\x)$ and $\Gamma_D:=v(D^\x)$.
  
  We now claim that
   $\bar{D}$ is isomorphic to $\R$. The proof of
  this claim goes as follows: From the definition of $v$, it is clear that $\Gamma_D$ contains 
  $ \frac{1}{2} \Z \times \frac{1}{2} \Z$, hence $[\Gamma_D : \Gamma_F] \geq 4$.
  But $[D:F]=4$ and it readily follows from  Draxl's ``Ostrowski Theorem'' (see
  \cite[Equation 1.2]{j-w}) that $\bar{D}=\bar{F}$ (and $[\Gamma_D : \Gamma_F] = 4$, so
  $\Gamma_D=\frac{1}{2} \Z \times \frac{1}{2} \Z$).

  Now a system of representatives of the set  $v(\Sym(D,\s)^\x)$ in $\Gamma_D/2\Gamma_D$ is given by $\Bigl\{(0,0),
  (0,\frac{1}{2}), (\frac{1}{2}, \frac{1}{2})\Bigr\}$, corresponding to the set of
  uniformizers $\{1, j, k\}$, and by \cite[Thm. 3.7]{Larmour} we obtain that
  \begin{equation}\label{star}
  \begin{split}
    W(D,\s) &\cong W(\bar D,\id) \oplus W(\bar D, \id) \oplus W(\bar D, \id)\\
            &\cong W(\R) \oplus W(\R) \oplus W(\R)\\
            &\cong \Z \oplus \Z \oplus \Z,
  \end{split} 
  \end{equation}
  where the first isomorphim is obtained by computing the residue forms
  corresponding to the three uniformizers. 
  We use this description of $W(D,\s)$
  to compute $I(F) \cdot W(D,\s)$. Let $\lambda$ denote the isomorphism  $W(D,\s)\cong \Z \oplus \Z \oplus \Z$
  given by \eqref{star}.
  Since $I(F)$ is additively generated by the
  forms $\la 1, a \ra$ for $a \in F^\times$ and $W(D,\s)$ is additively generated
  by the diagonal forms of dimension one (since $D$ is division), we only have to consider
  the forms $\la 1,a \ra \cdot \la \alpha \ra_\s = \la \alpha, a \alpha
  \ra_\s$ for $a \in F^\times$ and $\alpha \in \Sym(D,\s)^\times$. Since $v(a) \in
  \Gamma_F = \Z \times \Z$ we have $v(a\alpha) = v(a) + v(\alpha) \in v(\alpha)
  + 2 \Gamma_D = v(\alpha) + \Z \times \Z$. It follows that the form $\la \alpha, a
  \alpha\ra_\s$ only gives rise to one residue form in $W(\bar D, \id) \cong
  W(\R) \cong \Z$. This residue form has dimension $2$ and belongs to $I(\R)
  \cong 2\Z$. Therefore $\lambda\bigl(I(F) \cdot W(D,\s)\bigr) \subseteq 2\Z \oplus 2\Z \oplus
  2\Z$, and taking $\alpha$ successively  equal to $1$, $j$ and $k$, we see that 
  $\lambda\bigl(I(F) \cdot W(D,\s)\bigr) = 2\Z \oplus 2\Z \oplus
  2\Z$, i.e. 
  \[I(F) \cdot W(D,\s) = 2 W(D,\s) \cong  2\Z \oplus 2\Z \oplus 2\Z.\]
  In particular $I(F) \cdot W(D,\s)$ has index   $8$ in $W(D,\s)$ and thus cannot be equal
  to $I(D,\s)$ which has index $2$ in $W(D,\s)$ by \eqref{index2}.
\end{ex}

Since there may be several proper submodules of $W(A,\s)$ containing $I(F) \cdot
W(A,\s)$, it is interesting to see if one of them can be singled out
by a particular property. We present one such property below as an example, but other, more natural ones,  may very well exist.

In the remainder of this section we distinguish between hermitian forms $h$ over $(A,\s)$ and their classes $[h]$ in $W(A,\s)$.  
By the structure theory of $F$-algebras with involution and Morita theory, there exist $n\in\N$ and an $F$-division algebra with involution $(D,\vt)$ such that $W(A,\s)\cong W(M_n(D), \vt^t)$. Since rank modulo~$2$ is a Witt class invariant and since we will examine forms of
even rank we may also assume for convenience of notation that $(A,\s)=(M_n(D), \vt^t)$.

For $a\in \Sym(D,\vt)^\x$ we define the hermitian form
\[h_a:D^n\x D^n\too M_n(D),\ (x,y)\mapstoo \vt(x)^tay\]
(where $x$ and $y$ are considered as $1\x n$ matrices), which is of rank one over $(A,\s)$.
Let $\xi$ be the bijection between hermitian forms over $(A,\s)$ and hermitian forms over $(D,\vt)$ induced by Morita equivalence, as described in \cite{LU2}. Then $\xi(h_a)=\la a\ra_{\vt}$.

For every $a \in \Sym(D,\vt)^\times$ we define a set of hermitian forms over $(A,\s)$ by
\[\CF_a := \{ h_{a^{i_1}}  \perp \cdots \perp h_{a^{i_\ell}}    \mid \ell \in \N \textrm{ and } i_1,
\ldots, i_\ell \in \N_0\},\]
where $\perp$ denotes the usual orthogonal sum of forms.
Let $\eta_1=h_{a^{i_1}}  \perp \cdots \perp h_{a^{i_k}} $ and 
$\eta_2=h_{a^{j_1}}  \perp \cdots \perp h_{a^{j_\ell}} $ be in $\CF_a$. We define
\[\eta_1\boxtimes \eta_2 :=   \lperp_{p=1}^k \lperp_{q=1}^\ell h_{a^{i_p+j_q}}, \]
which is again an element of $\CF_a$. 

Observe that $h_a\perp h_{-a}$ and $h_1\perp h_{-a^2}$ are hyperbolic. Indeed, they are mapped by $\xi$ to the forms $\la a, -a\ra_{\vt}$ and $\la 1, -a^2\ra_{\vt}\cong \la 1, -1\ra_{\vt}$ (since $a=\vt(a)$), respectively, which are both hyperbolic.

Observe also that $I(A,\s)$ is additively generated by the classes $[h_1\perp h_a]$ for $a\in \Sym(D,\vt)^\x$ since $\xi(I(A,\s))=I(D,\vt)$.

\begin{lemma}\label{I-product}
  Let $a \in \Sym(D,\vt)^\times$ and let $\eta_1, \eta_2 \in \CF_a$ be such that 
  $[\eta_1 \boxtimes \eta_2] \in I(A,\s)$. Then $[\eta_1] \in I(A,\s)$ or $[\eta_2] \in I(A,\s)$.
\end{lemma}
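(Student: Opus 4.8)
The plan is to reduce the statement to an elementary parity count of ranks. The facts needed are all already in place: $\rk$ is additive on orthogonal sums, each form $h_{a^m}$ has rank one over $(A,\s)$, and rank modulo~$2$ is a Witt class invariant, so that for a hermitian form $h$ over $(A,\s)$ one has $[h]\in I(A,\s)$ precisely when $\rk h$ is even (this is exactly how $I(A,\s)$ was described above).

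First I would write $h_1=h_{a^{i_1}}\perp\cdots\perp h_{a^{i_k}}$ and $h_2=h_{a^{j_1}}\perp\cdots\perp h_{a^{j_\ell}}$, so that $\rk h_1=k$ and $\rk h_2=\ell$. Next I would observe that, by the very definition of $\boxtimes$, the form $h_1\boxtimes h_2$ is an orthogonal sum of exactly $k\ell$ forms of rank one, whence $\rk(h_1\boxtimes h_2)=k\ell$. Finally, assuming $[h_1\boxtimes h_2]\in I(A,\s)$, the parity criterion forces $k\ell$ to be even, so at least one of $k$ and $\ell$ is even, and the same criterion then yields $[h_1]\in I(A,\s)$ or $[h_2]\in I(A,\s)$.

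An entirely equivalent route is to transport everything through the Morita bijection $\xi$: then $h_1$ and $h_2$ become diagonal forms over $(D,-)$ of dimensions $k$ and $\ell$ respectively, $h_1\boxtimes h_2$ becomes a diagonal form of dimension $k\ell$, and one argues with $I(D,-)$ and the dimension index. There is no genuine obstacle in either approach; the one point deserving a word of care is that each $h_j\in\CF_a$ is a bona fide form, not merely a Witt class, so its number of rank-one summands is unambiguous — the passage to Witt classes intervenes only through the fact that membership in $I(A,\s)$ depends on the rank solely via its parity.
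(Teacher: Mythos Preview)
Your proof is correct and is essentially the same argument as the paper's: write $h_1$ and $h_2$ with $k$ and $\ell$ rank-one summands, observe $\rk(h_1\boxtimes h_2)=k\ell$, and use that membership in $I(A,\s)$ is detected by parity of rank. The additional remarks about transporting through $\xi$ and about forms versus Witt classes are accurate but not needed.
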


\begin{proof} 
Write $\eta_1=h_{a^{i_1}}  \perp \cdots \perp h_{a^{i_k}} $ and 
$\eta_2=h_{a^{j_1}}  \perp \cdots \perp h_{a^{j_\ell}} $. Then $\rk \eta_1=k$, $\rk \eta_2=\ell$ and $\rk (\eta_1\boxtimes \eta_2)=k\ell$. Since $[\eta_1 \boxtimes \eta_2] \in I(A,\s)$ and Witt equivalence does not 
change  the parity of the rank of forms  we know that $2$ divides $k\ell$ and the result follows.
\end{proof}

\begin{prop}\label{final}
Let $(I,N)$ be a prime m-ideal of the $W(F)$-module $W(A,\s)$ such that $2 \in I$. 
Then   $N = I(A,\s)$ if and only if for every $a \in \Sym(D,\vt)^\x$ and every $\eta_1, \eta_2 \in \CF_a$ 
$[\eta_1 \boxtimes \eta_2] \in N$ implies $[\eta_1] \in N$ or $[\eta_2] \in N$.
\end{prop}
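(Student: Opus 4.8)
The plan is to prove the two implications in turn. The forward implication is immediate: if $N=I(A,\s)$, then the asserted property is precisely \eqref{I-product}, which says that for $a\in\Sym(D,-)^\x$ and $h_1,h_2\in\CF_a$ with $[h_1\boxtimes h_2]\in I(A,\s)$ one has $[h_1]\in I(A,\s)$ or $[h_2]\in I(A,\s)$. So all the actual work is in the converse.

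For the converse, I would first set up a chain of reductions. Since $2\in I$ we have $I=I(F)$, and as $(I,N)$ is a prime m-ideal, $N$ is a \emph{proper} submodule of $W(A,\s)$ with $I(F)\cdot W(A,\s)\subseteq N$; note in particular that $\la 1,1\ra\in I(F)$, so $2\,W(A,\s)=\la 1,1\ra\cdot W(A,\s)\subseteq N$. Next, it suffices to prove $I(A,\s)\subseteq N$: by \eqref{index2} the submodule $I(A,\s)$ has index $2$ in $W(A,\s)$, hence is maximal, so $I(A,\s)\subseteq N\subsetneq W(A,\s)$ forces $N=I(A,\s)$. Finally, since $I(A,\s)$ is additively generated by the classes $[h_1\perp h_a]$ with $a\in\Sym(D,-)^\x$, it is enough to show $[h_1\perp h_a]\in N$ for each such $a$.

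To achieve this, fix $a\in\Sym(D,-)^\x$ and put $h':=h_1\perp h_a=h_{a^0}\perp h_{a^1}\in\CF_a$. Expanding the definition of $\boxtimes$ gives $h'\boxtimes h'=h_{a^0}\perp h_{a^1}\perp h_{a^1}\perp h_{a^2}$, so $[h'\boxtimes h']=[h_1]+2[h_a]+[h_{a^2}]$ in $W(A,\s)$. I would then invoke the identity $[h_{a^2}]=[h_1]$, which follows from the fact that $h_a\perp h_{-a}$, $h_1\perp h_{-a^2}$ and $h_{a^2}\perp h_{-a^2}$ are all hyperbolic (the last being of the form $h_b\perp h_{-b}$ with $b=a^2\in\Sym(D,-)^\x$): these yield $[h_{-a^2}]=-[h_1]$ and $[h_{-a^2}]=-[h_{a^2}]$. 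Hence $[h'\boxtimes h']=2[h_1]+2[h_a]=\la 1,1\ra\cdot[h_1\perp h_a]\in I(F)\cdot W(A,\s)\subseteq N$, and applying the hypothesis to the pair $(h',h')$ gives $[h_1\perp h_a]=[h']\in N$, as required.

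The main obstacle is not conceptual but rather in choosing the computation correctly. One has to pick $h'$ so that an \emph{odd} exponent of $a$ actually occurs, for otherwise $[h']$ would already lie in $N$ for trivial reasons and the implication drawn from the hypothesis would be vacuous; and one has to arrange that $[h'\boxtimes h']$ lands inside $I(F)\cdot W(A,\s)$ rather than merely inside $I(A,\s)$ — which is exactly where the identity $[h_{a^2}]=[h_1]$ together with the triviality $\la 1,1\ra\in I(F)$ are needed. Everything else is routine bookkeeping.
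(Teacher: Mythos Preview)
Your proof is correct and follows the same overall strategy as the paper: for the converse, reduce to showing that each generator $[h_1\perp h_a]$ of $I(A,\s)$ lies in $N$ by producing a $\boxtimes$-product in $N$ and invoking the hypothesis, then use $[W(A,\s):I(A,\s)]=2$ to conclude. The only difference is in the product chosen. The paper computes $(h_1\perp h_a)\boxtimes(h_1\perp h_{-a})=h_1\perp h_{-a}\perp h_a\perp h_{-a^2}$, which is hyperbolic, and then uses $[h_a]\equiv[h_{-a}]\pmod N$ (from $2\in I$) to get $[h_1\perp h_a]\in N$. You instead square $h'=h_1\perp h_a$ and use $[h_{a^2}]=[h_1]$ to obtain $[h'\boxtimes h']=2[h_1\perp h_a]\in N$. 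Your choice has the advantage that both factors lie literally in $\CF_a$ as defined (exponents in $\N_0$), whereas the paper's second factor $h_1\perp h_{-a}$ is not in $\CF_a$ unless $-a$ happens to be a non-negative power of $a$; so your argument is tighter with respect to the hypothesis as stated.
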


\begin{proof} The necessary direction is \eqref{I-product}. 
For the sufficient direction we will show that  $I(A,\s) \subseteq N$, and the result will follow since
$[W(A,\s):I(A,\s)] = 2$ and $N$ is a proper submodule of $W(A,\s)$. 
Let $a \in
\Sym(D,\vt)^\times$. Since $2 \in I$ we have $[2\cdot h_a] \in N$, i.e.
$[h_a] = [-h_{a}]=[h_{-a}] \mod N$. Furthermore,
$[(h_1 \perp h_a)\boxtimes (h_1\perp h_{-a})] = [h_1\perp h_{-a}\perp h_a \perp h_{-a^2}]=0 \in N$.
Therefore
$[h_1\perp h_a] \in N$ or $[h_1\perp h_{-a}]\in N$, i.e. $[h_1\perp h_a] \in N$ since $[h_a]=[h_{-a}] \mod N$. 
Since $I(A,\s)$ is additively generated by the classes
$[h_1\perp h_a]$, we conclude that $I(A,\s) \subseteq N$.
\end{proof}

Other results linking orderings on $F$ and ideals in the context of Witt groups of quadratic field extensions and quaternion division algebras  can be found in \cite{GB-H}.

\section{Canonical Identification of $H$-Signatures and Morphisms into $\Z$}

\begin{lemma}\label{epsilon}
  Let $\rho, \tau : W(A,\s) \rightarrow \Z$ be two surjective morphisms of
  abelian groups such that $\ker \tau = \ker \rho$. Then there exists $\ve  \in
  \{-1,1\}$ such that $\rho = \ve  \tau$.
\end{lemma}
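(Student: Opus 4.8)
The plan is to exploit the fact that $\Z$ is generated by $1$ and that a surjective group homomorphism onto $\Z$ is, up to sign, determined by its kernel. Concretely, I would first pick $h_0 \in W(A,\s)$ such that $\tau(h_0) = 1$; this is possible since $\tau$ is surjective. Then consider $\rho(h_0) \in \Z$. The key observation is that the quotient $W(A,\s)/\ker\tau = W(A,\s)/\ker\rho$ is infinite cyclic (isomorphic to $\Z$ via either map), and $h_0$ maps to a generator of this quotient under $\tau$.

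Next I would argue that $h_0$ also maps to a generator of the quotient under $\rho$. Indeed, for any $h \in W(A,\s)$ we have $\tau(h) \in \Z$, so $h - \tau(h)\cdot h_0 \in \ker\tau = \ker\rho$, hence $\rho(h) = \tau(h)\cdot \rho(h_0)$. Since $\rho$ is surjective, there is some $h$ with $\rho(h) = 1$, which forces $\rho(h_0) \mid 1$, i.e. $\rho(h_0) \in \{-1,1\}$. Setting $\ve := \rho(h_0)$, the identity $\rho(h) = \tau(h)\cdot\rho(h_0) = \ve\,\tau(h)$ holds for all $h$, which is exactly $\rho = \ve\tau$.

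I do not expect any real obstacle here; the statement is essentially the rigidity of $\Z$ as a cyclic group and the whole argument is three lines. The only mild subtlety is making sure the chain $h - \tau(h)\cdot h_0 \in \ker\tau$ is valid, which is immediate because $\tau$ is a group homomorphism and $\tau(\tau(h)\cdot h_0) = \tau(h)\cdot\tau(h_0) = \tau(h)$. Everything else is bookkeeping. This lemma will presumably be combined with the earlier classification results to show that $H$-signatures correspond canonically to the non-trivial $(W(F),\Z)$-morphisms $W(A,\s) \to \Z$ modulo the sign ambiguity, refining Lemma~\ref{kernels}.

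\begin{proof}
Since $\tau$ is surjective, choose $h_0 \in W(A,\s)$ with $\tau(h_0) = 1$. For any $h \in W(A,\s)$ we have $\tau\bigl(h - \tau(h)\cdot h_0\bigr) = \tau(h) - \tau(h)\tau(h_0) = 0$, so $h - \tau(h)\cdot h_0 \in \ker\tau = \ker\rho$, whence
\[
\rho(h) = \tau(h)\cdot \rho(h_0).
\]
Applying this to some $h$ with $\rho(h) = 1$ (which exists since $\rho$ is surjective) gives $\tau(h)\cdot\rho(h_0) = 1$ in $\Z$, so $\rho(h_0) \in \{-1,1\}$. Put $\ve := \rho(h_0)$. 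Then $\rho(h) = \ve\,\tau(h)$ for all $h \in W(A,\s)$, i.e. $\rho = \ve\tau$.
\end{proof}
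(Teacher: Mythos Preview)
Your proof is correct and follows essentially the same approach as the paper: pick $h_0$ with $\tau(h_0)=1$, use $\ker\tau=\ker\rho$ to write $\rho(h)=\tau(h)\cdot\rho(h_0)$, and invoke surjectivity of $\rho$ to force $\rho(h_0)\in\{-1,1\}$. The paper phrases the middle step as the decomposition $W(A,\s)=\Z\cdot h_0 + N$, but this is the same argument.
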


\begin{proof}
  Let $N := \ker \tau$, and let $h \in W(A,\s)$ be such that $\tau(h) = 1$. Then
  $W(A,\s) = \Z\cdot h + N$, and $\rho(W(A,\s)) = \Z\cdot \rho(h)$. Since $\rho$ is
  surjective we obtain $\rho(h) = \ve  \in \{-1,1\}$. So for $h' = k\cdot h + n$
  with $k \in \Z$ and $n \in N$, we have $\tau(h') = k$ and $\rho(h') = \ve 
  k$.
\end{proof}

\begin{defi}\label{equivalent}
  Let $(\vf, \psi_1)$ and $(\vf, \psi_2)$ be $(R,S)$-morphisms of modules from
  $M$ to $N_1$ and $N_2$ repectively. We say that $(\vf, \psi_1)$ and $(\vf,
  \psi_2)$ are \df{equivalent} if there is an isomorphism of $\im \vf$-modules
  $\vartheta : \im \psi_1
  \rightarrow \im \psi_2$ such that $\psi_2 = \vartheta \circ \psi_1$.
\end{defi}

\begin{lemma}\label{fP}
  Let $(f,g)$ be a non-trivial $(W(F),\Z)$-morphism from $W(A,\s)$ to $\Z$. Then
  there exists $P \in X_F\setminus \Nil[A,\s]$ such that $(f,g)$ and $(\sign_P, \sign^H_P)$ are
  equivalent.
\end{lemma}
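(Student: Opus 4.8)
The plan is to extract the ordering $P$ from the ring morphism $f$, using the earlier structural results, and then to reconcile $g$ with $\sign^H_P$ up to a sign. First I would apply Lemma~\ref{kernels} to the pair $(f,g)$: this produces a unique $P\in X_F$ with $f=\sign_P$ and $\ker\sign^H_P\subseteq\ker g$. Since $(f,g)$ is non-trivial, $g\neq 0$, and the ``in particular'' clause of Lemma~\ref{kernels} forces $P\notin\Nil[A,\s]$, i.e. $P\in\Xt_F$. This is the cheap part and it pins down the candidate $P$ immediately.

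The substance is to show $(f,g)$ and $(\sign_P,\sign^H_P)$ are equivalent in the sense of Definition~\ref{equivalent}, i.e. that there is an isomorphism of $\im f=\im\sign_P$-modules $\vartheta:\im g\to\im\sign^H_P$ with $g=\vartheta\circ\sign^H_P$ — wait, the direction: we need $\sign^H_P = \vartheta\circ g$ or $g=\vartheta\circ\sign^H_P$ depending on orientation; either way it amounts to producing a $\Z$-module isomorphism between the images intertwining the two maps. The key reduction is to pass to surjective maps onto $\Z$: replace $g$ by the corestriction $g:W(A,\s)\to\im g$, which is a nonzero subgroup of $\Z$, hence infinite cyclic, hence $\im g=m\Z$ for some $m\geq 1$ and $g$ becomes (after composing with division by $m$) a surjection onto $\Z$; similarly $\sign^H_P$ corestricts to a surjection onto $\Z$ since $\im\sign^H_P$ is infinite cyclic (it is nonzero because $P\in\Xt_F$, using the reference form: $\sign^{H}_P h_0\neq 0$). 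Now I would invoke Lemma~\ref{epsilon}: the two surjective group morphisms $W(A,\s)\to\Z$ obtained this way have the same kernel — indeed $\ker\sign^H_P\subseteq\ker g$ from Lemma~\ref{kernels}, and the reverse inclusion follows because both kernels have the same index, namely the corestrictions are surjections onto $\Z$ so each kernel has $W(A,\s)/\ker\cong\Z$, and a subgroup $\ker\sign^H_P\subseteq\ker g$ of a group with the two quotients both $\cong\Z$ must be equal (a proper inclusion of kernels would make $\Z\cong W(A,\s)/\ker\sign^H_P$ surject onto $W(A,\s)/\ker g\cong\Z$ with nontrivial kernel, impossible). Lemma~\ref{epsilon} then yields $\ve\in\{-1,1\}$ relating the two normalized maps, and unwinding the normalizations gives the required module isomorphism $\vartheta:\im g\to\im\sign^H_P$ with $g=\vartheta^{-1}\circ\sign^H_P$, which is exactly equivalence of the pairs since multiplication by $\pm 1$ scaled by the indices is an $\im f$-module isomorphism (the $W(F)$-action factors through $\sign_P$ on both sides and is respected automatically).

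The main obstacle I anticipate is purely bookkeeping: making sure the isomorphism $\vartheta$ between the images is genuinely $\im\sign_P$-linear and points in the direction demanded by Definition~\ref{equivalent}, and that the normalization constants ($m$ on the $g$ side, the generator of $\im\sign^H_P$ on the other) are handled so that $\vartheta$ is a bijection of the actual images rather than of $\Z$. There is a subtle point worth checking: equivalence as defined requires $\psi_2=\vartheta\circ\psi_1$ with $\vartheta$ an isomorphism $\im\psi_1\to\im\psi_2$, so one must verify $\im g$ and $\im\sign^H_P$ have the same image-lattice up to the $\pm 1$ twist — but since both are infinite cyclic subgroups of $\Z$ and the equivalence only asks for an abstract $\im f$-module isomorphism (not that $\vartheta$ be the identity of $\Z$), Lemma~\ref{epsilon} applied after corestriction supplies exactly this. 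So once the corestrictions are set up correctly the proof is short: Lemma~\ref{kernels} to get $P$, equality of kernels by index comparison, Lemma~\ref{epsilon} for the sign, and a one-line identification of $\vartheta$.
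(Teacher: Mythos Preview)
Your proof is correct and follows the same overall architecture as the paper's: obtain $P$ from Lemma~\ref{kernels}, establish $\ker g=\ker\sign^H_P$, then apply Lemma~\ref{epsilon} after composing with isomorphisms $\im g\cong\Z$ and $\im\sign^H_P\cong\Z$ to produce $\vartheta$. The one substantive difference is in how the equality of kernels is obtained. The paper invokes the classification of prime m-ideals: since $\im g\subseteq\Z$ is torsion-free over $\im f=\Z$, Lemma~\ref{m-ideal} makes $(\ker f,\ker g)$ a prime m-ideal with $2\notin\ker f=\ker\sign_P$, and case~$(i)$ of Proposition~\ref{classif} then forces $\ker g=\ker\sign^H_P$. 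You instead argue directly: both quotients $W(A,\s)/\ker\sign^H_P$ and $W(A,\s)/\ker g$ are infinite cyclic, and the inclusion $\ker\sign^H_P\subseteq\ker g$ induces a surjection $\Z\twoheadrightarrow\Z$, which must be an isomorphism. Your route is more elementary and avoids the heavier Proposition~\ref{classif}; the paper's route showcases the classification machinery developed in the preceding section. One cosmetic quibble: the phrase ``same index'' is imprecise since both quotients are infinite---what you mean, and what your next clause says correctly, is that both quotients are isomorphic to~$\Z$.
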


\begin{proof}
As seen in the proof of \eqref{kernels}, 
$f=\sign_P$ for some $P\in X_F$.
 By \eqref{m-ideal} $(\ker f, \ker g)$ is a prime m-ideal of the $W(F)$-module $W(A,\s)$. It then follows
  from \eqref{classif} that $\ker \sign^H_P
  = \ker g$. Since $g\not=0$, $P
  \not \in \Nil[A,\s]$ and there is an isomorphism of abelian groups $\rho : \im g
  \rightarrow \Z$. Let $\tau : \im \sign^H_P \rightarrow \Z$ be an isomorphism
  of abelian groups. Then $\rho \circ g$ and $\tau \circ \sign^H_P$ are
  surjective morphisms of abelian groups from $W(A,\s)$ to $\Z$ and by 
  \eqref{epsilon} there exists $\ve  \in \{-1,1\}$ such that $\rho \circ g =
  \ve  (\tau \circ \sign^H_P)$. Therefore $g = \vt \circ \sign^H_P$, where
  $\vt = \ve  (\rho^{-1} \circ \tau) : \im \sign_P^H \rightarrow \im g$ is an isomorphism
  of groups and therefore an isomorphism of $\Z$-modules. 
  So  $(f,g)$ and $(\sign_P, \sign^H_P)$ are equivalent by \eqref{equivalent}.
\end{proof}

As a consequence we obtain that $H$-signatures  can be canonically identified with
equivalence classes of $(W(F),\Z)$-morphisms from $W(A,\s)$ to $\Z$, more precisely:

\begin{prop}
  There is a bijection between the pairs $(\sign_P, \sign^H_P)$ for $P \in X_F\setminus \Nil[A,\s]$
   and the
  equivalence classes of non-trivial $(W(F),\Z)$-morphisms of modules from $W(A,\s)$ to $\Z$,
  given by
  \begin{align*}
    \{(\sign_P, \sign^H_P) \mid P \in X_F\setminus \Nil[A,\s]\} & \too  \Hom^*_{(W(F),\Z)}(W(A,\s),
      \Z)/\simm \\
     (\sign_P, \sign^H_P) & \mapstoo  (\sign_P, \sign^H_P)/\simm
    \end{align*}
\end{prop}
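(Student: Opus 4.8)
The plan is to establish that the map $(\sign_P, \sign^H_P) \mapsto (\sign_P, \sign^H_P)/\simm$ is well-defined, surjective, and injective, using the results already assembled in this section together with the classification of prime m-ideals.

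First I would check well-definedness and surjectivity, both of which are essentially already done. For well-definedness, note that each $(\sign_P, \sign^H_P)$ with $P \in X_F \setminus \Nil[A,\s]$ is a non-trivial $(W(F),\Z)$-morphism from $W(A,\s)$ to $\Z$, as recorded in the example following the definition of $(R,S)$-morphism, so it does have an equivalence class in $\Hom^*_{(W(F),\Z)}(W(A,\s),\Z)/\simm$. Surjectivity is precisely the content of \eqref{fP}: given any non-trivial $(W(F),\Z)$-morphism $(f,g)$, there exists $P \in X_F \setminus \Nil[A,\s]$ such that $(f,g)$ is equivalent to $(\sign_P, \sign^H_P)$, so its class lies in the image.

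The main work is injectivity. Suppose $P, Q \in X_F \setminus \Nil[A,\s]$ with $(\sign_P, \sign^H_P) \simm (\sign_Q, \sign^H_Q)$; I must show $P = Q$. By \eqref{equivalent}, equivalence of these two $(W(F),\Z)$-morphisms requires in particular that they share the same ring morphism component, i.e. $\sign_P = \sign_Q$ as morphisms $W(F) \to \Z$. As observed in the proof of \eqref{kernels}, a morphism of rings $W(F) \to \Z$ determines the ordering $\{a \in F^\times \mid f(\la a\ra) = 1\} \cup \{0\}$, and conversely $\sign_P$ recovers $P$ this way; hence $\sign_P = \sign_Q$ forces $P = Q$. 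This is the step I expect to be the only real obstacle, and it is mild: one just needs to make explicit that Definition~\eqref{equivalent} of equivalence keeps the first component $\vf$ fixed (the isomorphism $\vartheta$ only intertwines the second components $\psi_1, \psi_2$), so equivalent morphisms have literally the same ring map, and that the ring map determines $P$.

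Putting these together gives the bijection. I would write the proof as: the map is well-defined by the example identifying $(\sign_P, \sign^H_P)$ as a non-trivial morphism; it is surjective by \eqref{fP}; and it is injective because equivalent $(W(F),\Z)$-morphisms have the same ring-morphism component by \eqref{equivalent}, and $\sign_P = \sign_Q$ implies $P = Q$ by the argument in \eqref{kernels}. Hence it is a bijection.

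\begin{proof}
The map is well-defined: for each $P \in X_F \setminus \Nil[A,\s]$ the pair $(\sign_P, \sign^H_P)$ is a non-trivial $(W(F),\Z)$-morphism from $W(A,\s)$ to $\Z$, so it has a well-defined equivalence class in $\Hom^*_{(W(F),\Z)}(W(A,\s),\Z)/\simm$.

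Surjectivity is exactly \eqref{fP}: given a non-trivial $(W(F),\Z)$-morphism $(f,g)$ from $W(A,\s)$ to $\Z$, there exists $P \in X_F \setminus \Nil[A,\s]$ such that $(f,g)$ and $(\sign_P, \sign^H_P)$ are equivalent, so the class of $(f,g)$ lies in the image.

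For injectivity, suppose $P, Q \in X_F \setminus \Nil[A,\s]$ satisfy $(\sign_P, \sign^H_P) \simm (\sign_Q, \sign^H_Q)$. By \eqref{equivalent}, the equivalence is realised by an isomorphism of $\im \sign_P$-modules between $\im \sign^H_P$ and $\im \sign^H_Q$; in particular the two morphisms have the same first component, that is $\sign_P = \sign_Q$ as morphisms of rings $W(F) \to \Z$. As noted in the proof of \eqref{kernels}, such a morphism of rings is completely determined by the ordering $\{a \in F^\times \mid \sign_P(\la a\ra) = 1\} \cup \{0\} = P$, so $\sign_P = \sign_Q$ forces $P = Q$. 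Hence the map is injective, and therefore a bijection.
\end{proof}
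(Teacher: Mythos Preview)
Your proof is correct and follows essentially the same route as the paper: surjectivity by \eqref{fP}, and injectivity because equivalence (by its very definition \eqref{equivalent}) forces the first components to agree, whence $\sign_P=\sign_Q$ and so $P=Q$. You simply make explicit a couple of points (well-definedness, and the reason $\sign_P=\sign_Q\Rightarrow P=Q$) that the paper leaves implicit.
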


\begin{proof}
  By  \eqref{fP} we know that this map is surjective, and it is injective
  since assuming $(\sign_P, \sign^H_P)/\simm\ = (\sign_Q, \sign^H_Q)/\simm$ implies $\sign_P
  = \sign_Q$, and thus $P=Q$.
\end{proof}

\section*{Acknowledgement}

We would like to thank the referee for carefully reading our paper and for his
helpful suggestions.

\end{document}